\documentclass[11pt]{amsart}
\usepackage{amsmath}
\usepackage[all]{xy}
\usepackage{amssymb}
\usepackage{mathrsfs}
\usepackage[noadjust]{cite}
\usepackage{hyperref}
\usepackage{amsfonts}
\usepackage{mathdots}
\usepackage{todonotes}
\usepackage{bm}
\usepackage{tikz}

\theoremstyle{plain}
\newtheorem{thm}{Theorem}[section]
\newtheorem{prop}[thm]{Proposition}
\newtheorem{lem}[thm]{Lemma}
\newtheorem{cor}[thm]{Corollary}

\theoremstyle{remark}
\newtheorem{rem}[thm]{Remark}

\theoremstyle{definition}
\newtheorem{defn}[thm]{Definition}

\theoremstyle{conjecture}

\newcommand{\Ku}{\mathcal{K}u}

\def\D{\mathrm{D}}

\def\H{\mathrm{H}}
\def\K{\mathrm{K}}
\def\S{\mathrm{S}}

\def\X{\mathrm{X}}
\def\Y{\mathrm{Y}}
\def\Z{\mathrm{Z}}

\def\Aut{\mathrm{Aut}}

\def\RHom{\mathrm{RHom}}

\def\Hom{\mathrm{Hom}}

\def\Jac{\mathrm{Jac}}

\def\gcd{\operatorname{gcd}}

\def\HH{\mathrm{HH}}

\def\HS{\mathrm{HS}}

\def\MF{\mathrm{MF}}

\def\Id{\mathrm{Id}}

\def\Tr{\mathrm{Tr}}

\def\Per{\mathrm{Per}}

\def\Perf{\mathrm{Perf}}

\title{Group action of Hochschild-Serre algebra and categorical reconstruction}
\author{Xun Lin}
\address{School of Science and Engineering, The Chinese University of Hong Kong,
Shenzhen, China}
\email{linxun@cuhk.edu.cn;lin-x18@tsinghua.org.cn}

\begin{document}
\maketitle
\begin{abstract}
  We study the action of the Serre functor of smooth proper dg categories at their Hochschild-Serre algebra, and the invariant sub-aglebra of the Serre functor. As applications, we prove some theorems of categorical Torelli. Namely,
  let $\Ku(\X)$ be the Kuznetsov component of degree $d$ smooth hypersurface in weighted projective space $\mathbb{P}(a_0,  a_1, \cdots, a_n)$, where the common maximal divisor $\gcd(d, \sum^{n}_{i=0}a_{i})=1$. We show the categorical Torelli for $\Ku(\X)$. We show that the $\mathbb{C}^{\ast}$ equivariant matrix factorization category associated with a quasi-homogeneous polynomial function $f$ that has an isolated singularity together with a twisted functor $\{1\}$ reconstructs $f$ up to an isomorphism. 
\end{abstract}
\section{Introduction}
Given a smooth proper variety $\X$, the bounded derived category of coherent sheaves $\D^{\mathrm{b}}(\X)$ is considered the noncommutative space of $\X$. We are interested in the $k$ linear $dg$ categories, and the smooth proper $dg$ categories as counterparts of smooth proper algebraic varieties. For a smooth proper dg category $\mathcal{A}$, we have a Serre functor $S_{\mathcal{A}}$, which plays a similar role as the canonical divisor. There is a natural invariant of $\mathcal{A}$, the Hochschild-Serre algebra $\HS(\mathcal{A})$, which is the non-commutative generalization of the canonical ring of algebraic varieties.
That is, if $\mathcal{A}=\D^{\mathrm{b}}(\X)$, the canonical ring $\bigoplus_{i\geq 0} \H^{0}(\X, \K^{\otimes i}_{\X})$ is a natural subring of $\HS(\D^{\mathrm{b}}(\X))$. 
The pair $(\mathcal{A}, \HS(\mathcal{A}))$ was shown to determine the isomorphism classes of certain hypersurface $\X$ when $\mathcal{A}\subset \D^{\mathrm{b}}(\X)$ is the residue subcategory \cite{lin2024serre}\cite{lin2024ivhskuznetsovcomponentscategorical}. 
\par
In this paper, we investigate a refined invariant $\HS(\mathcal{A})^{\rho(\S^{i}_{\mathcal{A}}[t])}$, which is an invariant sub-bi-graded algebra of $\HS(\mathcal{A})$ under the action of functor $\S^{i}_{\mathcal{A}}[t]$, see Theorem~\ref{invariant}. The refined invariant is a derived Morita invariant by Theorem~\ref{moritainvariant}. Let $\mathcal{A}=\D^{\mathrm{b}}(\X)$, and assume the canonical divisor $\mathrm{K}_{X}$ is nontrivial, the refined invariants of $\mathcal{A}$ are already interesting. It is expected that the refined invariant also has applications to the bi-rational geometry motivated by our main reconstruction Theorem~\ref{mainthm} below. 
\subsection*{Reconstruction}
Let $\X$ be a smooth hypersurface of degree $d$ in $\mathbb{P}(a_{0},a_{1},\cdots,a_{n})$. We write
$$\Ku(\X)=\langle \mathcal{O}_{\X}, \mathcal{O}_{\X}(1),\cdots,\mathcal{O}_{\X}(\sum^{n}_{i=0}a_{i}-d)\rangle^{\perp}\subset \D^{\mathrm{b}}(\X).$$
Assume $\operatorname{gcd}(d,\sum^{n}_{i=0}a_{i})=1$, our main result shows $(\Ku(\X), \HS(\Ku(\X))^{\rho(\S_{\Ku(\X)}[-n-1])})$ reconstructs the isomorphism classes of $\X$.
\begin{thm}(= Theorem~\ref{reconstruction2})\label{mainthm}
 Let $\X$ and $\Y$ be smooth degree $d$ hypersurface in $\mathbb{P}(a_{0},a_{1},\cdots,a_{n})$. Assume $\operatorname{gcd}(d,\sum^{n}_{i=0}a_{i})=1$. If there is a Fourier-Mukai equivalence $\Ku(\X)\cong \Ku(\Y)$, then $\X\cong \Y$.   
\end{thm}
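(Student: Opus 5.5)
Since $\S^{i}_{\Ku(\X)}[t]$-invariants are Morita invariant (Theorem~\ref{moritainvariant}), the plan is to recover the Jacobian ring of $\X$ from the invariant sub-algebra $\HS(\Ku(\X))^{\rho(\S_{\Ku(\X)}[-n-1])}$ and then apply a Mather--Yau / Donagi type Torelli argument.

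\textbf{Step 1: transport the invariant.} A Fourier--Mukai equivalence $\Phi\colon\Ku(\X)\simeq\Ku(\Y)$ commutes with Serre functors. By Theorem~\ref{moritainvariant} it therefore induces a bi-graded algebra isomorphism
\[
\HS(\Ku(\X))^{\rho(\S_{\Ku(\X)}[-n-1])}\cong \HS(\Ku(\Y))^{\rho(\S_{\Ku(\Y)}[-n-1])}.
\]

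\textbf{Step 2: compute $\HS(\Ku(\X))$.} I would use the description of $\Ku(\X)$ as the category of graded matrix factorizations $\MF^{gr}(f)$ of the defining quasi-homogeneous polynomial $f$ (Orlov). In this model $\S_{\Ku(\X)}$ is, up to shift, the grading twist $\{\sum a_i - d\}$, and $\HH_\ast$ of twists is computed by the Polishchuk--Vaintrob / Ballard--Favero--Katzarkov formula. That formula is a sum over $\gamma\in\mu_d$ of pieces of Jacobian rings of restrictions $f|_{\mathrm{Fix}(\gamma)}$.

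\textbf{Step 3: identify the invariant sub-algebra.} The action $\rho(\S[-n-1])$ acts on the $\gamma$-summands, or twists them, through the character determined by $\sum a_i$. Because $\gcd(d,\sum a_i)=1$, this character is nontrivial on every nontrivial $\gamma$, so only the untwisted sector $\gamma=1$ survives. The invariant sub-algebra should then be exactly $\bigoplus_k \Jac(f)_{kd-\sum a_i\cdot(\ast)}$, with its multiplication. In other words, it is a bi-graded subring of $\Jac(f)$ containing the graded pieces $\Jac(f)_{td-\sum a_i}$ that carry the IVHS, together with enough degrees to generate, using the coprimality to hit all residues. The product should agree with multiplication in $\Jac(f)$, as in \cite{lin2024serre}.

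\textbf{Step 4: reconstruct $f$.} From a graded ring isomorphism of sufficiently many graded pieces of $\Jac(f)$ and $\Jac(g)$, I would invoke the Macaulay/Donagi symmetrizer argument as in \cite{lin2024ivhskuznetsovcomponentscategorical}. Gorenstein duality reconstructs the Jacobian ideal in low degrees, and then Mather--Yau (graded version) gives $f\simeq g$ up to a graded coordinate change, hence $\X\cong\Y$.

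The main obstacle is Step 3: pinning down precisely how $\rho(\S[-n-1])$ acts on twisted sectors, including signs and shifts from the Koszul/determinant characters, and showing that the coprimality hypothesis kills every twisted sector while keeping enough untwisted degrees to run the Torelli argument. I would first test this on the case of ordinary projective space ($a_i=1$, $\gcd(d,n+1)=1$).
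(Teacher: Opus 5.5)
Your outline (Morita invariance of the $\rho(\S[-n-1])$-invariants, the matrix factorization model, coprimality killing the twisted sectors, then Mather--Yau) matches the paper's. However, Steps~3--4 leave open the point that actually closes the argument, and the fallback you propose is not justified.

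The missing observation is that coprimality is used a second time, to put the \emph{entire} graded Jacobian ring inside the invariant algebra:
\begin{enumerate}
\item On $\mathcal{A}=\MF(\mathbb{C}^{n+1},\mathbb{C}^{\ast},f)$ one has $\S_{\mathcal{A}}\cong\{-\sum a_i\}[n+1]$ and $\{d\}\cong[2]$. Choose $m_0$ with $-m_0\sum a_i\equiv 1 \pmod d$. The leftover $\{kd\}\cong[2k]$ and the shift $[m_0(n+1)]$ are absorbed into some $n_0$, giving $\S_{\mathcal{A}}^{m_0}[n_0]\cong\{1\}$.
\item Hence $\bigoplus_t\Hom(\Id,\{t\})=\bigoplus_t\Hom(\Id,\S^{m_0t}_{\mathcal{A}}[n_0t])$ is a graded subalgebra of $\HS(\mathcal{A})$ lying on the ray $t\mapsto(m_0t,n_0t)$.
\item Since $\rho$ is a homomorphism and $\S_{\mathcal{A}}[-n-1]\cong\{1\}^{-\sum a_i}$, Theorem~\ref{eigenspacedecomposition} shows that $\rho(\S_{\mathcal{A}}[-n-1])$ acts on the $g$-sector by $g^{-\sum a_i}$. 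This is $\neq 1$ for $g\neq 1$.
\item So the invariants on this ray are exactly the untwisted pieces $\Jac(f)_t$, $0\le t\le N$. That is all of $\Jac(f)$ with its product (Theorem~\ref{subringjacobian}).
\item Restricting the bi-graded isomorphism of your Step~1 to this ray gives $\Jac(f_{\X})\cong\Jac(f_{\Y})$ as graded algebras, and Mather--Yau (Donagi's Proposition~1.3) applies directly.
\end{enumerate}

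In your version, Step~3 ends with a hedged ``subring containing enough degrees''. In fact the invariant algebra is not a subring of $\Jac(f)$ at all: it is bi-graded and periodic, and $\Jac(f)$ is recovered as its restriction to a ray. Step~4 then relies on a Donagi-type symmetrizer/IVHS argument from partial graded pieces. That kind of argument needs additional numerical hypotheses on $d$ and the weights, none of which are assumed in this theorem, so as written Step~4 does not go through. Once you have the whole ring, it is also unnecessary. The eigenvalue statement you flag as the main obstacle is exactly Theorem~\ref{eigenspacedecomposition}, which you can cite rather than rederive. (Minor: with the paper's conventions $\S_{\mathcal{A}}[-n-1]\cong\{-\sum a_i\}$, so your twist $\{\sum a_i-d\}$ has the opposite sign. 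This is harmless, since only $\gcd(d,\sum a_i)$ enters.)
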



\subsection*{Reconstruction with twisted functor}
  Let $\Y$ be a degree $d\leq n$ smooth hypersurface in $\mathbb{P}^{n}$. Define the rotation functor of $\Ku(\Y)$ as $O: E\mapsto \operatorname{Pr}(E\otimes^{\mathbb{L}} \mathcal{O}(1))$, which is considered as an induced polarization from $\mathcal{O}_{\Y}(1)$. The pair $(\Ku(\Y), O)$ reconstructs $Y$ by \cite{pirozhkov2024categorical}. Let $f\in k[x_{0},x_{1},\cdots,x_{n}]$ be a degree $d$ homogeneous polynomial such that $\Z(f)\subset \mathbb{P}(a_{0},a_{1},\cdots,a_{n})$ is smooth, here the weight $x_{i}$ is $a_{i}$. We assume $gcd(a_{0},a_{1},\cdots,a_{n})=1$. We have the equivariant matrix factorization category  $\MF(\mathbb{C}^{n+1},\mathbb{C}^{\ast},f)$ associated to $f$, which is equivalent to $\Ku(\Z(f))$ when $d\leq \sum^{n}_{i=0}a_{i}$, and the twisted functor $\{1\}$ of $\MF(\mathbb{C}^{n+1},\mathbb{C}^{\ast},f)$ is the rotation functor when $a_{0}=a_{1}=\cdots=a_{n}=1$, see \cite{BFK}. The following theorem shows the pair $(\MF(\mathbb{C}^{n+1},\mathbb{C}^{\ast},f), \{1\}$) reconstructs the polynomial $f$ up to a linear base change. 
\begin{thm}(= Theorem~\ref{matrixreconstruction2})\label{matrixreconstruction}
    Let $f_{1}$ and $f_{2}$ be two degree d homogeneous polynomial that defined smooth hypersurface in $\mathbb{P}(a_{0},a_{1},\cdots,a_{n})$. If there is a Morita equivalence of pairs $$(\MF(\mathbb{C}^{n+1},\mathbb{C}^{\ast},f_{1}),\{1\})\simeq (\MF(\mathbb{C}^{n+1},\mathbb{C}^{\ast},f_{2}),\{1\}),$$
   then $f_{1}$ and $f_{2}$ differ by a linear base change. 
\end{thm}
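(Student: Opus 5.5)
The plan is to reduce the statement to the Jacobian ring of $f$ and then apply a Mather--Yau/Donagi type reconstruction for quasi-homogeneous isolated singularities. Write $\mathcal{A}_{j}=\MF(\mathbb{C}^{n+1},\mathbb{C}^{\ast},f_{j})$.

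\emph{Step 1: Hochschild cohomology of the pair.} The key input is the known computation of Hochschild invariants of graded matrix factorization categories, following \cite{BFK} and Ballard--Favero--Katzarkov. The bigraded space $\bigoplus_{i,t}\Hom(\Id,\{i\}[t])$ of natural transformations from the identity to the twists is a bigraded algebra. I will use the part of it with $\{i\}$, $i\in\mathbb{Z}$, and even cohomological shift $t$ matched to $i$. Its $\mathbb{C}^{\ast}$-invariant, untwisted summand is
$$\bigoplus_{i\ge 0}\Jac(f)_{id}\;\subset\;\bigoplus_{i,t}\Hom(\Id,\{i\}[t]),$$
sitting in the appropriate shifts. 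The other summands are the contributions of the fixed loci of the finite-order elements $\zeta\in\mu_{d}\subset\mathbb{C}^{\ast}$ acting on $\mathbb{C}^{n+1}$. These sit in odd or in different degrees, indexed by the dimension of the fixed locus. I would isolate the $\zeta=1$ sector using bidegree considerations, in the same spirit as the refined invariant $\HS(\mathcal{A})^{\rho(\S^{i}_{\mathcal{A}}[t])}$ of Theorem~\ref{invariant}.

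\emph{Step 2: invariance under the equivalence.} A Morita equivalence of pairs $\Phi:\mathcal{A}_{1}\simeq\mathcal{A}_{2}$ comes with an isomorphism $\Phi\circ\{1\}\cong\{1\}\circ\Phi$. Conjugation by $\Phi$ therefore induces an isomorphism of bigraded algebras
$$\bigoplus\Hom(\Id,\{i\}[t])\;\cong\;\bigoplus\Hom(\Id,\{i\}[t]),$$
by the same argument as the derived Morita invariance in Theorem~\ref{moritainvariant}. Restricting to the summand from Step 1 gives a graded ring isomorphism $\bigoplus_{i}\Jac(f_{1})_{id}\cong\bigoplus_{i}\Jac(f_{2})_{id}$, respecting the multiplication induced by composition.

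\emph{Step 3: reconstruct $f$.} This is the main obstacle: the recovered ring is only the degree $d\mathbb{Z}$ Veronese-type subring of $\Jac(f)$, not the full Jacobian ring. I would first try a weighted version of Donagi's symmetrizer argument, as used in \cite{lin2024serre}\cite{lin2024ivhskuznetsovcomponentscategorical}. Since $\Jac(f)$ is a Gorenstein Artinian ring with socle degree $\sum_{i}(d-2a_{i})$, the $d\mathbb{Z}$-graded pieces together with the multiplication maps determine the lower graded pieces $\Jac(f)_{k}$ through Macaulay duality and symmetrizer lemmas. This recovers the Jacobian ideal $J(f)$ up to a graded linear change of variables. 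If that fails in low degrees, I would instead use the finer pieces $\Hom(\Id,\{i\}[t])$ for all $i$, not only multiples of $d$, which should carry $\Jac(f)_{k}$ in every degree $k$ through the fixed-locus sectors. After recovering $\Jac(f)$ as a graded algebra, I would apply the graded Mather--Yau theorem for quasi-homogeneous isolated singularities: $f\in J(f)$ by the Euler relation, and $\Jac(f)$ determines $f$ up to a graded automorphism of $k[x_{0},\dots,x_{n}]$. That automorphism can be taken linear in the weighted sense, giving the claimed base change between $f_{1}$ and $f_{2}$.
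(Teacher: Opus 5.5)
There is a genuine gap, and it lies in Steps 1--2. First, you have misidentified the untwisted sector. By Proposition~\ref{extendedhochschild} with $m=0$, the $g=1$ summand of $\Hom(\Id,\{t\})$ is $\Jac(f)_{t}$ for \emph{every} integer $t$, not only for $t\in d\mathbb{Z}$. The pieces $\Jac(f)_{id}$ are only what you see in $\Hom(\Id,[2i])=\Hom(\Id,\{id\})$, i.e.\ in Hochschild cohomology. So there is no Veronese obstacle: the full graded ring $\Jac(f)=\bigoplus_{t=0}^{N}\Jac(f)_{t}$ already sits inside $\bigoplus_{t}\Hom(\Id,\{1\}^{t})$.

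Your primary Step~3 route would in fact fail. Take a plane cubic ($n=2$, $d=3$, all $a_i=1$), which is allowed since the statement has no numerical hypothesis. The socle degree is $3$, so $\bigoplus_{i}\Jac(f)_{3i}=\Jac(f)_{0}\oplus\Jac(f)_{3}\cong\mathbb{C}[\epsilon]/(\epsilon^{2})$ for every $f$. This ring cannot see the $j$-invariant, so no symmetrizer or Macaulay-duality argument can recover $f$ from it. Your fallback (use all twists $\{i\}$) points in the right direction, but you attribute the wrong source: the pieces $\Jac(f)_{k}$ come from the untwisted sector. The fixed-locus sectors contribute the unrelated rings $\Jac(f_{g})$, and these must be discarded.

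Second, and more seriously, you give no valid mechanism for isolating the $g=1$ sector, nor for knowing that the equivalence preserves it. Bidegree does not separate sectors: twisted sectors can occupy exactly the same bidegree as untwisted ones. Compare $\HH^{2}(\Ku(\Y))\cong\Jac(\omega)_{4}\oplus\Jac(\omega\vert_{-1})_{0}$ in Theorem~\ref{kernelgamma}. In any case, a splitting produced by the BFK computation is not a priori respected by $\Phi_{*}$.

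The missing idea is Theorem~\ref{eigenspacedecomposition}: the $g$-sector is precisely the $g$-eigenspace of $\rho(\{1\})$. Hence the invariant subalgebra $\bigoplus_{t}\Hom(\Id,\{1\}^{t})^{\rho(\{1\})}$ is exactly $\bigoplus_{t}\Jac(f)_{t}=\Jac(f)$, with no $\gcd$ hypothesis. This is why one uses $\{1\}$ rather than the Serre-functor invariants you allude to: $\S_{\mathcal{A}}[-n-1]\cong\{-\sum_{i}a_{i}\}$ only isolates $g=1$ when $\gcd(d,\sum_{i}a_{i})=1$.

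Because $\Phi$ intertwines the two twist functors, $\Phi_{*}$ intertwines the two actions $\rho(\{1\})$. This is the commuting-functor version of Theorem~\ref{moritainvariant}, as in the Remark following it. So $\Phi_{*}$ carries invariants to invariants, giving $\Jac(f_{1})\cong\Jac(f_{2})$ as graded algebras directly. Mather--Yau then finishes the argument exactly as in your last step.
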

\begin{rem}
  Note that there is no extra numerical assumption of the degree $d$. If the weight $a_{0}=a_{1}=\cdots=a_{n}=1$, it was proved in \cite{lin2024serre}. 
\end{rem}
To prove Theorem~\ref{mainthm}, we study the bi-graded algebra
$\HS(\Ku(\X))^{S_{\Ku(\X)}[-n-1]}$. A detail analysis of the action of functor $\S_{\Ku(\X)}[-n-1]$ at $\HS(\Ku(\X))$ shows a decomposition of eigenspaces with eigenvalues $\langle \exp(\frac{2\pi i}{d}), \exp(2\cdot\frac{2\pi i}{d}),\cdots, \exp(d\cdot\frac{2\pi i}{d})\rangle$, see Theorem~\ref{eigenspacedecomposition}. The invariant subspace is nothing but the eigenspace for eigenvalue $1$, therefore we have a natural sub-ring,
$$\Jac(f_{\X})\subset \HS(\Ku(\X))^{\S_{\Ku(\X)}[-n-1]},$$
where $\Jac(f_{\X})$ is Jacobian ring of defined polynomial $f_{\X}$ of $\X$. The reconstruction follows from the Mather-Yau reconstruction \cite[Proposition 1.3]{Donagi1986}. To prove Theorem~\ref{matrixreconstruction}, we show the invariant $\HS((\MF(\mathbb{C}^{n+1},\mathbb{C}^{\ast},f))^{\{1\}}$ reconstructs the Jacobian ring of $f$, then the statement follows from the Mather-Yau reconstruction \cite[Proposition 1.3]{Donagi1986}.
\subsection*{Other applications}
According to \cite[Propositino 1.2]{perry2021hochschild}, $\S_{\Ku(\X)}[-n-1]$ acts trivially at the Hochschild cohomology. Combining Theorem~\ref{eigenspacedecomposition}, we show that the dimension of $\HH^{2}(\X)$ is the dimension of $\Jac(f_{\X})_{d}$, see Theorem~\ref{dimequal}. Another application is to give an another proof of \cite[Theorem 3.1]{linssz2024kunetsovfanoconjecture}, see Theorem~\ref{kernelgamma}. 

\section{Group action on Hochschild-Serre algebra}
Let $\mathcal{A}$ be a $k$ linear smooth proper $dg$ category. We write $\Per(\mathcal{A})$ as the $dg$ category of perfect right $\mathcal{A}$ modules, whose homotopic category $\Perf(\mathcal{A})$ is the triangulated category of perfect modules. We naturally identify right $\mathcal{A}^{op}$ module as left $\mathcal{A}$ module. An object $\K\in \Per(\mathcal{A}^{op}\otimes \mathcal{A})$ defined a dg functor $-\otimes_{\mathcal{A}} \K: \Per(\mathcal{A})\rightarrow \Per(\mathcal{A})$. According to \cite{shklyarov2007serre},  $\Perf(\mathcal{A})$ admits Serre functor, $\S_{\mathcal{A}}=-\otimes^{\mathbb{L}}_{\mathcal{A}}\mathcal{A}^{*}$, where $\mathcal{A}^{\ast}=\RHom(\mathcal{A},k)$.
The inverse Serre functor $\S^{-1}_{\mathcal{A}}=-\otimes^{\mathbb{L}}_{\mathcal{A}}\mathcal{A}^{\vee}$, where $\mathcal{A}^{\vee}=\RHom_{\Perf(\mathcal{A}^{op}\otimes\mathcal{A})}(\mathcal{A}, \mathcal{A}^{op}\otimes \mathcal{A})$. 
\begin{lem}\cite[Theorem 4.5]{shklyarov2007serre}
  We have isomorphism in $\Perf(\mathcal{A}^{op}\otimes \mathcal{A})$,
  $$\mathcal{A}\cong \mathcal{A}^{\vee}\otimes^{\mathbb{L}}_{\mathcal{A}}\mathcal{A}^{\ast} \cong \mathcal{A}^{\ast}\otimes^{\mathbb{L}}_{\mathcal{A}}\mathcal{A}^{\vee}$$
\end{lem}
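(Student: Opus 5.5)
The plan is to show that the bimodules $\mathcal{A}^{\ast}$ and $\mathcal{A}^{\vee}$ represent mutually inverse endofunctors of $\Perf(\mathcal{A})$, and then to upgrade these functor isomorphisms to isomorphisms of bimodules. For the upgrade we use that, for smooth proper $\mathcal{A}$, the assignment $\K\mapsto -\otimes^{\mathbb{L}}_{\mathcal{A}}\K$ identifies $\Perf(\mathcal{A}^{op}\otimes\mathcal{A})$ with the category of continuous (quasi-)functors $\Per(\mathcal{A})\to\Per(\mathcal{A})$ (To\"en's theorem). Composition corresponds to $\otimes^{\mathbb{L}}_{\mathcal{A}}$ of kernels, and the identity corresponds to the diagonal bimodule $\mathcal{A}$. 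Since $\mathcal{A}$ is smooth and proper, both $\mathcal{A}^{\ast}$ and $\mathcal{A}^{\vee}$ are perfect bimodules, so everything stays inside $\Perf(\mathcal{A}^{op}\otimes\mathcal{A})$. It therefore suffices to produce a bimodule map $\mathcal{A}^{\vee}\otimes^{\mathbb{L}}_{\mathcal{A}}\mathcal{A}^{\ast}\to\mathcal{A}$, and similarly in the other order, that is a quasi-isomorphism.

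\textbf{Step 1: a closed formula for $\M\otimes^{\mathbb{L}}_{\mathcal{A}}\mathcal{A}^{\vee}$.} For a perfect right module $\M$, prove
$$\M\otimes^{\mathbb{L}}_{\mathcal{A}}\mathcal{A}^{\vee}\simeq \RHom_{\mathcal{A}}(\mathcal{A}^{\ast},\M)$$
(or the variant with the two tensor sides swapped). This is the standard adjunction between a perfect bimodule and its dual. The natural map is defined for all $\M$, both sides are exact in $\M$, and on the representables $\M=h_{x}$ both sides evaluate to $\mathcal{A}^{\vee}(x,-)$ by Yoneda and the definition of $\mathcal{A}^{\vee}$. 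Perfection of the diagonal (smoothness) lets us pass from representables to all of $\Perf(\mathcal{A})$ by d\'evissage.

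\textbf{Step 2: the composite is the identity.} Combine Step 1 with the defining property of $\S_{\mathcal{A}}=-\otimes^{\mathbb{L}}_{\mathcal{A}}\mathcal{A}^{\ast}$. For perfect $\M,\N$ we have
$$\Hom(\N,\M\otimes^{\mathbb{L}}_{\mathcal{A}}\mathcal{A}^{\ast})\simeq \Hom(\M,\N)^{\ast},$$
which follows from $\mathcal{A}^{\ast}(x,y)=\mathcal{A}(y,x)^{\ast}$ on representables, extended by exactness, with properness guaranteeing finiteness. From this and Step 1 we deduce that $\S_{\mathcal{A}}$ is an equivalence with quasi-inverse $-\otimes^{\mathbb{L}}_{\mathcal{A}}\mathcal{A}^{\vee}$. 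Concretely, evaluate on representables:
$$h_{x}\otimes^{\mathbb{L}}\mathcal{A}^{\ast}\otimes^{\mathbb{L}}\mathcal{A}^{\vee}\simeq \RHom_{\mathcal{A}}(\mathcal{A}^{\ast},h_{x}\otimes^{\mathbb{L}}\mathcal{A}^{\ast}).$$
Using full faithfulness of $\S_{\mathcal{A}}$, this right-hand side is $h_{x}$. Full faithfulness itself holds because the double dual $\Hom(\M,\N)^{\ast\ast}=\Hom(\M,\N)$ for finite-dimensional complexes. These identifications are natural in $x$ and compatible with the bimodule structure. Hence the evaluation (counit) map $\mathcal{A}^{\ast}\otimes^{\mathbb{L}}_{\mathcal{A}}\mathcal{A}^{\vee}\to\mathcal{A}$ is a quasi-isomorphism objectwise, and therefore an isomorphism in $\Perf(\mathcal{A}^{op}\otimes\mathcal{A})$. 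The other composite follows symmetrically, or formally: a one-sided inverse of an invertible kernel is a two-sided inverse.

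\textbf{Main obstacle.} The step I expect to be hardest is the \emph{naturality} in Step 2: keeping the isomorphism $h_{x}\mapsto h_{x}$ compatible with the $\mathcal{A}$-bimodule structure, rather than just objectwise. An objectwise isomorphism of functors does not by itself give an isomorphism of kernels. I would handle this by writing down the evaluation map $\mathcal{A}^{\ast}\otimes_{\mathcal{A}}\mathcal{A}^{\vee}\to\mathcal{A}$ explicitly at the level of dg bimodules, using cofibrant replacements, and only then checking that it is a quasi-isomorphism pointwise.
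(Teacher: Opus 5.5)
The paper gives no argument for this lemma; it simply quotes Shklyarov. Your outline therefore has to stand on its own. The strategy is viable: realise $-\otimes^{\mathbb{L}}_{\mathcal{A}}\mathcal{A}^{\vee}$ as the right adjoint of $\S_{\mathcal{A}}$, then use full faithfulness of $\S_{\mathcal{A}}$. However, the one step that actually connects $\mathcal{A}^{\vee}$ to $\mathcal{A}^{\ast}$ is asserted, not proved.

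\textbf{The gap is in Step 1.} You claim that on $h_x$ both sides are $\mathcal{A}^{\vee}(x,-)$ ``by Yoneda and the definition of $\mathcal{A}^{\vee}$''.
\begin{itemize}
\item Yoneda does compute $h_x\otimes^{\mathbb{L}}_{\mathcal{A}}\mathcal{A}^{\vee}$, but not $\RHom_{\mathcal{A}}(\mathcal{A}^{\ast},h_x)$. The module $\mathcal{A}^{\ast}$ is not representable, and $\mathcal{A}^{\vee}$ is defined as the bimodule dual of the diagonal, not as a one-sided dual of $\mathcal{A}^{\ast}$.
\item There is no a priori ``natural map'' $M\otimes^{\mathbb{L}}_{\mathcal{A}}\mathcal{A}^{\vee}\to\RHom_{\mathcal{A}}(\mathcal{A}^{\ast},M)$. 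By adjunction, giving one is the same as giving a bimodule map $\mathcal{A}^{\vee}\otimes^{\mathbb{L}}_{\mathcal{A}}\mathcal{A}^{\ast}\to\mathcal{A}$, which is what the lemma is about.
\item An identification made objectwise at each $h_x$ has exactly the naturality defect you worry about later.
\end{itemize}
So the content of the lemma sits in Step 1. The role you give smoothness there (d\'evissage from representables to $\Perf(\mathcal{A})$) is misplaced: that d\'evissage is automatic for exact functors.

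\textbf{How to fill it.} What is needed is a bimodule isomorphism $\mathcal{A}^{\vee}\simeq\RHom_{\mathcal{A}}(\mathcal{A}^{\ast},\mathcal{A})$. It follows from two facts.
\begin{itemize}
\item For right modules $N,M$, one has $\RHom_{\mathcal{A}}(N,M)\simeq\RHom_{\mathcal{A}^{op}\otimes\mathcal{A}}(\mathcal{A},\Hom_k(N,M))$, where $\Hom_k(N,M)$ is the bimodule $(a,b)\mapsto\Hom_k(N(a),M(b))$.
\item Properness gives $\Hom_k(V^{\ast},W)\simeq V\otimes_k W$ when $V$ has finite-dimensional cohomology. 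Hence $\Hom_k(\mathcal{A}^{\ast},\mathcal{A})$ is the free bimodule $\mathcal{A}^{op}\otimes\mathcal{A}$, and the two leftover actions match its inner structure.
\end{itemize}
Step 1 then becomes the standard map $M\otimes^{\mathbb{L}}_{\mathcal{A}}\RHom_{\mathcal{A}}(\mathcal{A}^{\ast},\mathcal{A})\to\RHom_{\mathcal{A}}(\mathcal{A}^{\ast},M)$, which is an isomorphism for perfect $M$. Smoothness is what makes $\mathcal{A}^{\ast}$ perfect as a one-sided module. That is why Step 1 applies to $\S_{\mathcal{A}}(h_x)$, and why $\RHom_{\mathcal{A}}(\mathcal{A}^{\ast},\mathcal{A}^{\ast})\simeq\mathcal{A}^{\ast}\otimes^{\mathbb{L}}_{\mathcal{A}}\RHom_{\mathcal{A}}(\mathcal{A}^{\ast},\mathcal{A})$.

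\textbf{Consequences for Step 2.} Your map is then the adjunction unit $\mathcal{A}\to\RHom_{\mathcal{A}}(\mathcal{A}^{\ast},\mathcal{A}^{\ast})\simeq\mathcal{A}^{\ast}\otimes^{\mathbb{L}}_{\mathcal{A}}\mathcal{A}^{\vee}$. It is not a counit $\mathcal{A}^{\ast}\otimes^{\mathbb{L}}_{\mathcal{A}}\mathcal{A}^{\vee}\to\mathcal{A}$ as you wrote. It is a bimodule map by construction, so your ``main obstacle'' disappears. It is a quasi-isomorphism exactly because $\S_{\mathcal{A}}$ is fully faithful on representables.

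\textbf{The other order.} Do not derive it from ``a one-sided inverse of an invertible kernel is two-sided''. Full faithfulness only gives $\mathcal{A}\simeq\mathcal{A}^{\ast}\otimes^{\mathbb{L}}_{\mathcal{A}}\mathcal{A}^{\vee}$. Invertibility of $\mathcal{A}^{\ast}$ (equivalently, essential surjectivity of $\S_{\mathcal{A}}$, which you never check) is what is being proved. Instead, run the same argument for $\mathcal{A}^{op}$. It is again smooth and proper, with the same bimodules $\mathcal{A}^{\ast}$ and $\mathcal{A}^{\vee}$ (sides exchanged), and this yields $\mathcal{A}^{\vee}\otimes^{\mathbb{L}}_{\mathcal{A}}\mathcal{A}^{\ast}\simeq\mathcal{A}$.
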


From now on, we interpret $\Id_{\mathcal{A}}$, $\S^{-1}_{\mathcal{A}}$, and $\S_{\mathcal{A}}$ as bi-module $\mathcal{A}$, $\mathcal{A}^{\vee}$ and $\mathcal{A}^{*}$ respectively. We interpret $\Hom(\Id_{\mathcal{A}},\S^{m}_{\mathcal{A}})$ as the morphism of bi-modules in the derived category $\Perf(\mathcal{A}^{op}\otimes \mathcal{A})$. Note that we have natural identification of bi-modules $\S_{\mathcal{A}}^{m}\circ \S_{\mathcal{A}}^{n}=\S_{\mathcal{A}}^{m+n}$, $[n]\circ \S_{\mathcal{A}}=\S_{\mathcal{A}}\circ [n]$. 
\begin{defn}\cite{belmans2023hochschild}\cite{lin2024serre}
    The Hochschild-Serre algebra of $\mathcal{A}$ is a bi-graded algebra
    $$\HS(\mathcal{A}):=\bigoplus_{m,n}\Hom(\Id_{\mathcal{A}}, \S^{m}_{\mathcal{A}}[n])$$
\end{defn}

\subsection*{Commutators of Serre functor}
Let $F$ be a Morita auto-equivalence of $\mathcal{A}$ with inverse $F^{-1}$.
\begin{prop}\cite{belmans2023hochschild}
   There is an isomorphism of bi-modules $\sigma_{F}: F\circ S_{\mathcal{A}}\cong S_{\mathcal{A}}\circ F$. 
\end{prop}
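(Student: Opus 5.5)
The strategy is to deduce the isomorphism from the uniqueness of Serre functors. Take $F$ to be a Morita auto-equivalence of $\mathcal{A}$, given by an invertible bimodule $F\in\Perf(\mathcal{A}^{op}\otimes\mathcal{A})$ with inverse $F^{-1}$. The conjugate $F^{-1}\circ \S_{\mathcal{A}}\circ F$ is again a Serre functor for $\Perf(\mathcal{A})$. For perfect modules $E,G$ we compute
$$\Hom(E, F^{-1}\S_{\mathcal{A}}F(G))\cong \Hom(F(E),\S_{\mathcal{A}}F(G))\cong \Hom(F(G),F(E))^{\ast}\cong \Hom(G,E)^{\ast},$$
and every isomorphism here is natural in $E$ and $G$. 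Serre functors are unique up to unique isomorphism, so this gives an isomorphism of functors $F^{-1}\S_{\mathcal{A}}F\cong \S_{\mathcal{A}}$ on the homotopy category. Composing with $F$ on the left yields $\S_{\mathcal{A}}\circ F\cong F\circ\S_{\mathcal{A}}$.

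The main obstacle is upgrading this isomorphism of functors on $\Perf(\mathcal{A})$ to an isomorphism of bimodules in $\Perf(\mathcal{A}^{op}\otimes\mathcal{A})$. An isomorphism of exact functors does not in general come from an isomorphism of kernels. I would avoid the problem by working with bimodules from the start.

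At the bimodule level the composite $F\circ\S_{\mathcal{A}}$ corresponds to $\mathcal{A}^{\ast}\otimes^{\mathbb{L}}_{\mathcal{A}}F$, with the convention that the left factor is applied first, and $\S_{\mathcal{A}}\circ F$ corresponds to $F\otimes^{\mathbb{L}}_{\mathcal{A}}\mathcal{A}^{\ast}$. By Morita theory, $F$ invertible means $F\otimes^{\mathbb{L}}_{\mathcal{A}} F^{-1}\cong\mathcal{A}\cong F^{-1}\otimes^{\mathbb{L}}_{\mathcal{A}}F$ as bimodules. So it suffices to prove the bimodule identity
$$F^{-1}\otimes^{\mathbb{L}}_{\mathcal{A}}\mathcal{A}^{\ast}\otimes^{\mathbb{L}}_{\mathcal{A}}F\cong\mathcal{A}^{\ast}.$$
To prove it I would use the characterization $\mathcal{A}^{\ast}=\RHom_k(\mathcal{A},k)$. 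For any bimodule $M$, tensor–hom adjunction gives
$$F^{-1}\otimes_{\mathcal{A}}\RHom_k(M,k)\otimes_{\mathcal{A}}F\cong \RHom_k(F^{\vee}\otimes_{\mathcal{A}} M\otimes_{\mathcal{A}}(F^{-1})^{\vee},k).$$
Here $F^{\vee}$ denotes the dual of $F$ as a one-sided perfect module. For an invertible bimodule, this dual is identified with the inverse $F^{-1}$, up to swapping sides. This step uses properness and smoothness of $\mathcal{A}$, so that one-sided duals of perfect bimodules behave well; this is the same framework as \cite[Theorem 4.5]{shklyarov2007serre}. Taking $M=\mathcal{A}$, the inner bimodule collapses to $F^{-1}\otimes_{\mathcal{A}}F\cong\mathcal{A}$, and the right-hand side becomes $\RHom_k(\mathcal{A},k)=\mathcal{A}^{\ast}$, as required.

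As a cross-check, one can instead argue with inverse Serre functors. Here $\mathcal{A}^{\vee}$ is the bimodule dual, which is intrinsically defined, so it commutes with conjugation by an invertible bimodule. This gives $F^{-1}\otimes\mathcal{A}^{\vee}\otimes F\cong\mathcal{A}^{\vee}$. Inverting both sides and using the lemma $\mathcal{A}^{\vee}\otimes^{\mathbb{L}}_{\mathcal{A}}\mathcal{A}^{\ast}\cong\mathcal{A}$ then yields the statement for $\mathcal{A}^{\ast}$. Finally I would name the resulting bimodule isomorphism $\sigma_F$.
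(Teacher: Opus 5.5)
Your proposal is correct. Your primary argument differs from the paper's, while your ``cross-check'' is essentially the paper's proof.

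The paper never works with $\mathcal{A}^{\ast}$ directly. It shows that the autoequivalence $M\mapsto F\otimes^{\mathbb{L}}_{\mathcal{A}}M\otimes^{\mathbb{L}}_{\mathcal{A}}F^{-1}$ of $\Perf(\mathcal{A}^{op}\otimes\mathcal{A})$ fixes the diagonal $\mathcal{A}$. It then invokes \cite[Lemma 1.2(ii)]{polishchuk2014lefschetz} (compatibility with bimodule duality) to conclude that it fixes $\mathcal{A}^{\vee}$, i.e.\ $F\circ\S^{-1}_{\mathcal{A}}\circ F^{-1}\cong\S^{-1}_{\mathcal{A}}$. Finally it deduces the statement for $\S_{\mathcal{A}}$. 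That last step rests on $\mathcal{A}^{\vee}$ being inverse to $\mathcal{A}^{\ast}$, i.e.\ on Shklyarov's lemma, and hence on smoothness.

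Your main route instead shows that $k$-linear duality commutes with conjugation by an invertible bimodule. Tensor--hom adjunction moves $F^{-1}$ and $F$ inside $\RHom_k(-,k)$ as one-sided duals, and those duals are again $F^{\mp1}$. This gives $F^{-1}\otimes\RHom_k(M,k)\otimes F\cong\RHom_k(F^{-1}\otimes M\otimes F,k)$ for every bimodule $M$, and taking $M=\mathcal{A}$ finishes. This is more direct and more general: it never needs $\mathcal{A}^{\vee}$ or Shklyarov's inversion lemma.

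Contrary to your remark, the identification of one-sided duals needs neither smoothness nor properness. An invertible bimodule is automatically perfect on each side, since the equivalence preserves compact objects. Both of its one-sided duals are isomorphic to $F^{-1}$ as bimodules, because both adjoints of an equivalence are its inverse and the (co)units are bimodule maps. Properness enters only so that $\mathcal{A}^{\ast}$ is a perfect bimodule representing $\S_{\mathcal{A}}$.

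When you write it up, make the side bookkeeping you summarize as ``up to swapping sides'' explicit: state which dual of $F$ and which dual of $F^{-1}$ appear. That is the only place an error could slip in. The paper's route is more formal, staying inside the rigid category of perfect bimodules, but it depends on the two cited lemmas. You were right to discard the homotopy-category uniqueness argument, since it yields only an isomorphism of functors, not of kernels.
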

\begin{proof}
 $F$ considered as bi-module defines an auto-equivalence $\Perf(\mathcal{A})\rightarrow \Perf(\mathcal{A})$. Then $\RHom_{\Perf(\mathcal{A})}(F,\mathcal{A})$ is the inverse of $F$ with natural bi-modules structure, which is denoted as $F^{-1}$.
 \begin{lem}
    The module $F\otimes_{k} F^{-1}\in \Perf(\mathcal{A}\otimes_{k} \mathcal{A}^{op}\otimes_{k} \mathcal{A}^{op}\otimes_{k}\mathcal{A})$ defined by 
    $$\Perf(\mathcal{A}^{op}\otimes \mathcal{A})\rightarrow  \Perf(\mathcal{A}^{op}\otimes \mathcal{A}),\ \ M \mapsto M\otimes^{\mathbb{L}}_{\mathcal{A}^{op}\otimes \mathcal{A}}F\otimes_{k} F^{-1}\cong F\otimes^{\mathbb{L}}_{\mathcal{A}} M\otimes^{\mathbb{L}}_{\mathcal{A}} F^{-1}$$
    maps $\mathcal{A}$ as bi-module to $\mathcal{A}$ itself. In particular, it defines an equivalence
 \end{lem}
 \begin{proof}
    The left multiplication defines isomorphism $$\mathcal{A}\rightarrow \RHom_{\Perf(\mathcal{A})}(F,F)\cong F\otimes^{\mathbb{L}}_{\mathcal{A}} F^{-1}\cong \mathcal{A}\otimes^{\mathbb{L}}_{\mathcal{A}^{op}\otimes \mathcal{A}}F\otimes_{k}F^{-1}.$$
    Similarly, the evaluation counit morphism is an isomorphism,
    $$F^{-1}\otimes^{\mathbb{L}}_{\mathcal{A}}F\cong \mathcal{A}.$$
   The inverse of $F\otimes_{k}F^{-1}$ is $F^{-1}\otimes_{k} F$, namely, for any $M\in \Perf(\mathcal{A}^{op}\otimes_{k}\mathcal{A})$, we have
   $$ F^{-1}\otimes^{\mathbb{L}}_{\mathcal{A}} F\otimes^{\mathbb{L}}_{\mathcal{A}} M\otimes^{\mathbb{L}}_{\mathcal{A}} F^{-1}\otimes^{\mathbb{L}}_{\mathcal{A}} F\cong \mathcal{A}\otimes^{\mathbb{L}}_{\mathcal{A}} M\otimes^{\mathbb{L}}_{\mathcal{A}} \mathcal{A}\cong M.$$
 \end{proof}
  Then by \cite[Lemma 1.2(ii)]{polishchuk2014lefschetz}, 
  \begin{align*}
\mathcal{A}^{\vee}\cong( \mathcal{A}\otimes^{\mathbb{L}}_{\mathcal{A}^{op}\otimes\mathcal{A}}F^{-1}\otimes_{k} F)^{\vee}
  \cong & F\otimes_{k} F^{-1}\otimes^{\mathbb{L}}_{\mathcal{A}\otimes \mathcal{A}^{op}}\mathcal{A}^{\vee}\\
  \cong & \mathcal{A}^{\vee}\otimes^{\mathbb{L}}_{\mathcal{A}^{op}\otimes\mathcal{A}}F\otimes_{k}F^{-1}.
  \end{align*}

In other words, $F\otimes_{k} F^{-1}$ maps $\mathcal{A}^{\vee}$ to $\mathcal{A}^{\vee}$. 
Therefore we have an isomorphism,
 $$F\circ \S^{-1}_{\mathcal{A}}\circ F^{-1}\cong \S^{-1}_{\mathcal{A}}.$$
Using $S_{\mathcal{A}}=S_{\mathcal{A}}^{-1}\circ \S_{\mathcal{A}}^{2}$, we also have 
$$F\circ \S_{\mathcal{A}}\circ F^{-1}\cong \S_{\mathcal{A}}.$$
\end{proof}
We call the isomorphism as commutator isomorphism for Serre functor. Then we have natural isomorphism for any integer $m$ and $n$ by induction 
$$\sigma_{F,m,n}: F\circ \S^{m}_{\mathcal{A}}[n]\cong \S^{m}_{\mathcal{A}}[n]\circ F,$$
where we naturally identify $[n]\circ F\cong F\circ [n]$,
\begin{lem}\label{associativecommutator}
   Let $F_{1}$ and $F_{2}$ be Morita equivalence, then there is a natural isomorphism of commutators $\sigma_{F_{1}\circ F_{2},m,n}\cong \sigma_{F_{1},m,n}\circ\sigma_{F_{2},m,n}$.
\end{lem}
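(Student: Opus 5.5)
The plan is to unwind the construction of $\sigma_F$ in the preceding Proposition and check that every step is compatible with composition of Morita equivalences. I first treat the basic case $m=-1$, $n=0$, i.e. $\S^{-1}_{\mathcal{A}}=\mathcal{A}^{\vee}$. There $\sigma_F$ is the composite
\[
\mathcal{A}^{\vee}\cong(\mathcal{A}\otimes^{\mathbb{L}}_{\mathcal{A}^{op}\otimes\mathcal{A}}F^{-1}\otimes_k F)^{\vee}\cong \mathcal{A}^{\vee}\otimes^{\mathbb{L}}_{\mathcal{A}^{op}\otimes\mathcal{A}}F\otimes_k F^{-1}.
\]
It is built from two ingredients: the unit/counit isomorphisms $\mathcal{A}\cong F\otimes^{\mathbb{L}}_{\mathcal{A}}F^{-1}$ and $F^{-1}\otimes^{\mathbb{L}}_{\mathcal{A}}F\cong\mathcal{A}$, and the natural duality isomorphism of \cite[Lemma 1.2(ii)]{polishchuk2014lefschetz}. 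The latter commutes the bimodule dual $(-)^{\vee}$ with the equivalence $M\mapsto F^{-1}\otimes M\otimes F$ of $\Perf(\mathcal{A}^{op}\otimes\mathcal{A})$.

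For $F=F_1\circ F_2$, i.e. the bimodule $F_2\otimes^{\mathbb{L}}_{\mathcal{A}}F_1$ with inverse $F_1^{-1}\otimes^{\mathbb{L}}_{\mathcal{A}}F_2^{-1}$, I argue as follows.

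\textbf{Step 1.} The unit of the composite adjunction is the composite of the units, by the standard fact that the unit of a composite of adjoint equivalences is $\eta_{F_1F_2}=F_1\eta_{F_2}F_1^{-1}\circ\eta_{F_1}$, and similarly for counits. Concretely, the conjugation equivalence $M\mapsto (F_1F_2)M(F_1F_2)^{-1}$ is canonically the composite of conjugation by $F_2$ followed by conjugation by $F_1$, and its trivialization on $\mathcal{A}$ is the composite of the two trivializations.

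\textbf{Step 2.} The duality isomorphism $(\Phi(M))^{\vee}\cong\Phi'(M^{\vee})$ of \cite{polishchuk2014lefschetz} is natural in $M$ and compatible with composition of the equivalences $\Phi$. It arises from the adjunction $\RHom(\Phi M,-)\cong\RHom(M,\Phi^{-1}-)$, and adjunction isomorphisms compose.

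\textbf{Step 3.} Combining these, and using naturality of the isomorphism for $F_1$ applied to the morphism $\sigma_{F_2}$, gives $\sigma_{F_1F_2,-1,0}=\sigma_{F_1,-1,0}\circ\sigma_{F_2,-1,0}$. Here the right-hand side means the composite $F_1F_2\S^{-1}\cong F_1\S^{-1}F_2\cong \S^{-1}F_1F_2$, with whiskering understood.

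The remaining cases are formal. The case $m=1$ comes from $\S_{\mathcal{A}}=\S^{-1}_{\mathcal{A}}\circ\S^2_{\mathcal{A}}$, or more directly by inverting the $m=-1$ isomorphism via the identification $\S\circ\S^{-1}\cong\Id$. For general $m,n$, the map $\sigma_{F,m,n}$ is defined inductively as an iterated composite of $\sigma_{F,\pm1,0}$, together with the canonical isomorphism $[n]\circ F\cong F\circ[n]$. The latter is trivially multiplicative in $F$, since it is just the tensor-product shift. The induction is then an interchange-law (Eckmann--Hilton style) rearrangement of whiskered $2$-morphisms: compatibility for each factor $\S^{\pm1}$ yields compatibility for the composite.

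The main obstacle is Steps 1--2. We must verify that the Polishchuk duality isomorphism and the unit isomorphisms are genuinely functorial in $F$, rather than merely existing abstractly. The clean route is to work in the homotopy category of bimodules, viewed as a monoidal category under $\otimes^{\mathbb{L}}_{\mathcal{A}}$. There $\mathcal{A}^{\vee}$ is characterized by a universal property: it is the right dual of $\mathcal{A}$ as a bimodule. In that setting $\sigma_F$ is the canonical isomorphism induced by transport of structure along the invertible object $F$. Transport along $F_1F_2$ is transport along $F_2$ followed by transport along $F_1$, and uniqueness of the isomorphism given by the universal property forces the claimed equality.
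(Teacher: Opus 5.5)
Your main line agrees with the paper's argument. The paper's entire proof is the canonical isomorphism $(F_{1}\circ F_{2})\circ \S^{m}[n]\circ (F_{1}\circ F_{2})^{-1}\cong F_{1}\circ (F_{2}\circ \S^{m}[n]\circ F_{2}^{-1})\circ F_{1}^{-1}$, which is your Step 1. It tacitly assumes that the particular $\sigma_F$, built from the unit isomorphisms and \cite[Lemma 1.2(ii)]{polishchuk2014lefschetz}, respects this identification. Your Steps 2--3 make that check explicit. Getting the case $m=1$ by inverting the $m=-1$ isomorphism is also cleaner than the paper's appeal to $\S_{\mathcal{A}}=\S^{-1}_{\mathcal{A}}\circ\S^{2}_{\mathcal{A}}$, which presupposes a commutator for $\S^{2}_{\mathcal{A}}$.

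The one step that fails as written is the ``clean route'' in your last paragraph. In the monoidal category $(\Perf(\mathcal{A}^{op}\otimes\mathcal{A}),\otimes^{\mathbb{L}}_{\mathcal{A}})$ the diagonal bimodule $\mathcal{A}$ is the unit object, so its dual there is $\mathcal{A}$ itself. Uniqueness of duals in that category therefore says nothing about $\mathcal{A}^{\vee}=\RHom_{\mathcal{A}^{op}\otimes\mathcal{A}}(\mathcal{A},\mathcal{A}^{op}\otimes\mathcal{A})$. The duality defining $\mathcal{A}^{\vee}$ is taken over $\mathcal{A}^{op}\otimes\mathcal{A}$. That is, $\mathcal{A}^{\vee}$ is the adjoint of $\mathcal{A}$ viewed as a $1$-morphism between $k$ and $\mathcal{A}^{op}\otimes\mathcal{A}$ in the Morita bicategory. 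Equivalently, it is the object with $\mathcal{A}^{\vee}\otimes^{\mathbb{L}}_{\mathcal{A}^{op}\otimes\mathcal{A}}M\cong\RHom_{\mathcal{A}^{op}\otimes\mathcal{A}}(\mathcal{A},M)$ naturally in $M$. That is the universal property to use, and the argument runs as follows. Conjugation by $F$ is the invertible $1$-morphism of $\mathcal{A}^{op}\otimes\mathcal{A}$ given by the kernel $F\otimes_{k}F^{-1}$, and it fixes $\mathcal{A}$ via the unit. Taking mates, it fixes $\mathcal{A}^{\vee}$ up to a unique isomorphism compatible with units and counits. Since mates compose, this gives multiplicativity in $F$. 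Even then, you must still identify this canonical isomorphism with the one the paper produces from Polishchuk's lemma, which is precisely your Step 2. So the corrected last paragraph repackages Steps 1--2 rather than replacing them. (Minor: the rearrangement for general $(m,n)$ is just the interchange law for whiskered $2$-morphisms, not an Eckmann--Hilton argument.)
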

\begin{proof}
   The isomorphism follows from the natural isomorphism 
   $$\xymatrix{(F_{1}\circ F_{2})\circ \S^{m}[n]\circ (F_{1}\circ F_{2})^{-1}\ar[r]^{\simeq}&F_{1}\circ (F_{2}\circ \S^{m}[n]\circ F_{2}^{-1})\circ F_{1}^{-1}}$$
\end{proof}
We write $\Aut_{M}(\mathcal{A})$ as the group of Morita auto-equivalence of $\mathcal{A}$. Given $F\in \Aut_{M}(\mathcal{A})$, we define an isomorphism of Hochschild-Serre algebra as
follows. For any $a\in \Hom(\Id,\S_{\mathcal{A}}^{m}[n])$, defined $F_{\ast}(a)\in \Hom(\Id,\S_{\mathcal{A}}^{m}[n])$ by the compositions
$$\xymatrix@C=1.7cm{\Id_{\mathcal{A}} \ar[r]^{\eta_{F}}&F\circ \Id_{\mathcal{A}}\circ F^{-1}\ar[r]^{a}&F\circ \S_{\mathcal{A}}^{m}[n]\circ F^{-1}\ar[r]^{\sigma_{F,m,n}}&S_{\mathcal{A}}^{m}[n]\circ F\circ F^{-1}\ar[r]_{\eta^{-1}_{F}}& \S_{\mathcal{A}}^{m}[n]}.$$
In the case of Hochschild homology, the definition $F_{\ast}$ coincide with the definition in \cite{caldararu2003mukai}\cite{polishchuk2014lefschetz}\cite{perry2021hochschild}.
\subsection*{Group homomorphism}
We write $\Aut(\HS(\mathcal{A}))$ as the auto-isomorphism of bi-graded algebra $\HS(\mathcal{A})$.
\begin{thm}
 We have a group homomorphism,
 $$\rho: \Aut_{M}(\mathcal{A})\longrightarrow \Aut(\HS(\mathcal{A})),\ \rho(F)=F_{\ast}.$$
 
\end{thm}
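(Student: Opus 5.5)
We need to check three things: that each $F_{\ast}$ preserves the bi-grading and is an algebra automorphism, that $\rho$ is multiplicative, and that $\rho$ is well defined on isomorphism classes. Preservation of the bi-grading is built into the definition, since $F_{\ast}$ sends $\Hom(\Id,\S^{m}_{\mathcal{A}}[n])$ to itself. It is convenient to rewrite $F_{\ast}$ as conjugation. Write $c_{F}(a)=F\circ a\circ F^{-1}$ for a morphism of bimodules $a\colon \S^{m}[n]\to \S^{m'}[n']$, that is, the functor $M\mapsto F\otimes^{\mathbb{L}}_{\mathcal{A}}M\otimes^{\mathbb{L}}_{\mathcal{A}}F^{-1}$ of the Lemma inside the proof above. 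Then
$$F_{\ast}(a)=\tau_{m,n}\circ c_{F}(a)\circ \eta_{F},$$
where $\tau_{m,n}\colon F\circ \S^{m}[n]\circ F^{-1}\cong \S^{m}[n]$ is the isomorphism induced by $\sigma_{F,m,n}$ and $\eta_{F}^{-1}$.

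\textbf{Multiplicativity of $F_{\ast}$.} The product in $\HS(\mathcal{A})$ of $a\in\Hom(\Id,\S^{m}[n])$ and $b\in\Hom(\Id,\S^{m'}[n'])$ is $(\S^{m}[n](b))\circ a$, using $\S^{m}[n]\circ\S^{m'}[n']=\S^{m+m'}[n+n']$. The functor $c_{F}$ is monoidal up to the unit and counit isomorphisms $F^{-1}\circ F\cong\Id$, so $c_{F}$ of the product equals the product of $c_{F}(a)$ and $c_{F}(b)$ after inserting $\eta$'s. To get $F_{\ast}(ab)=F_{\ast}(a)F_{\ast}(b)$, the commutators must be compatible with composition:
$$\tau_{m+m',n+n'}=\tau_{m,n}\circ \S^{m}[n](\tau_{m',n'})$$
under $F\S^{m}[n]F^{-1}F\S^{m'}[n']F^{-1}\cong F\S^{m+m'}[n+n']F^{-1}$. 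This holds essentially by construction, because $\sigma_{F,m,n}$ was defined by induction from $\sigma_{F}=\sigma_{F,1,0}$ and the shift identifications. I would verify it by induction on $|m|$, treating $m<0$ through the inverse $\S^{-1}$, where the isomorphism $F\S^{-1}F^{-1}\cong \S^{-1}$ was constructed first. The unit is preserved because $F_{\ast}(\Id)=\eta_{F}^{-1}\eta_{F}=\Id$. Bijectivity will follow from the homomorphism property, since $(F^{-1})_{\ast}$ then inverts $F_{\ast}$.

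\textbf{Group homomorphism.} For $F_{1},F_{2}$, the conjugation $c_{F_{1}F_{2}}$ agrees with $c_{F_{1}}\circ c_{F_{2}}$ through the canonical isomorphism $(F_{1}F_{2})^{-1}\cong F_{2}^{-1}F_{1}^{-1}$. By Lemma~\ref{associativecommutator}, $\sigma_{F_{1}F_{2},m,n}$ corresponds to the composite of $\sigma_{F_{1},m,n}$ and $\sigma_{F_{2},m,n}$ under this identification. Pasting the two diagrams defining $(F_{1})_{\ast}((F_{2})_{\ast}(a))$ gives the diagram for $(F_{1}F_{2})_{\ast}(a)$; the units $\eta$ compose correctly because the unit of a composite adjunction is the composite of the units. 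Finally, $\rho$ is well defined on isomorphism classes: an isomorphism $F\cong F'$ of bimodules intertwines $\eta_{F},\eta_{F'}$ and the commutators, by naturality of the constructions in the proof of $\sigma_{F}$.

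\textbf{Main obstacle.} The hardest step is the coherence: showing that $\sigma_{F}$ is compatible with composition of Serre powers and natural in $F$. The isomorphism $F\S^{-1}F^{-1}\cong\S^{-1}$ came from the abstract duality isomorphism of \cite{polishchuk2014lefschetz}, so it is only canonical up to the choices made there. My first attempt would be to define the commutator canonically as the image of the identity under the equivalence $M\mapsto FMF^{-1}$, which is an autoequivalence fixing $\mathcal{A}$, and to use that dualization $(-)^{\vee}$ commutes with this autoequivalence functorially. This would make every $\sigma_{F,m,n}$ canonical and hence automatically coherent.
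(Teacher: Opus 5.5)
Your proposal is correct and follows the same route as the paper. Multiplicativity of $F_{\ast}$ comes from inserting $F^{-1}\circ F$ between the two factors and using that the commutators are compatible with composition of Serre powers, and the homomorphism property then follows from Lemma~\ref{associativecommutator}. Two small differences are worth noting:
\begin{itemize}
\item You state the compatibility explicitly as $\tau_{m+m',n+n'}=\tau_{m,n}\ast\tau_{m',n'}$, whereas the paper's large diagram uses it implicitly. Your written formula has the order of composition slightly garbled; the horizontal composite is what is meant.
\item Your remarks on well-definedness on isomorphism classes and on making $\sigma_{F}$ canonical address coherence points the paper passes over silently, but they do not change the argument.
\end{itemize}
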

\begin{proof}
  First, $F_{\ast}: \HS(\mathcal{A})\rightarrow \HS(\mathcal{A})$ is an isomorphism of bi-graded algebra. Let $b\in \Hom(\Id,\S^{m_{1}}[n_{1}])$ and $a\in \Hom(\Id,\S^{m_{2}}[n_{2}])$. We have a commutative diagram, 
  $$\xymatrix{\Id\ar[r]\ar[d]&F\circ \Id\circ F^{-1}\ar[r]^{b}&F\circ \S^{m_{1}}[n_{1}]\circ F^{-1}\ar[d]\\
  F\circ \Id\circ F^{-1}\ar[d]^{b}&&F\circ \S^{m_{1}}[n_{1}]\circ \Id \circ F^{-1}\ar[d]\\
  F\circ \S^{m_{1}}[n_{1}]\circ \Id \circ F^{-1}\ar[d]^{a}&&F\circ \S^{m_{1}}[n_{1}]\circ F^{-1}\circ F\circ \Id\circ F^{-1}\ar[d]^{a}\\
  F\circ \S^{m_{1}}[n_{1}]\circ \S^{m_{2}}[n_{2}]\circ F^{-1}\ar[d]&&F\circ \S^{m_{1}}[n_{1}]\circ F^{-1}\circ F\circ \S^{m_{2}}[n_{2}]\circ F^{-1}\ar[d]\\
  \S^{m_{1}+m_{2}}[n_{1}+n_{2}]\circ F\circ F^{-1}\ar[rrdd]&&F\circ \S^{m_{1}}[n_{1}]\circ \Id \circ \S^{m_{2}}[n_{2}]\circ F^{-1}\ar[d]\\
  &&\S^{m_{1}+m_{2}}[n_{1}+n_{2}]\circ F\circ F^{-1}\ar[d]\\
  &&\S^{m_{1}+m_{2}}[n_{1}+n_{2}]}$$
  The left hand side composition is $F_{\ast}(a\cdot b)$, and the right hand side composition is $F_{\ast}(a)\cdot F_{\ast}(b)$, therefore $F_{\ast}(a\cdot b)=F_{\ast}(a)\cdot F_{\ast}(b)$. Next, $\rho$ is a group homomorphism by Lemma~\ref{associativecommutator}.
\end{proof}
    
For $F\in \Aut_{M}(\mathcal{A})$, we have an invariant sub-space $\HS(\mathcal{A})^{\rho(F)}\subset \HS(\mathcal{A})$.
\begin{cor}\label{invariant}
   $\HS(\mathcal{A})^{\rho(F)}$ is a sub-bi-graded algebra of $\HS(\mathcal{A})$.
\end{cor}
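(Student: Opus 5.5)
The corollary follows directly from the preceding theorem, which shows that $\rho(F)=F_{\ast}$ is an automorphism of the bi-graded algebra $\HS(\mathcal{A})$. The plan is to check three things: that the invariant subspace is bi-graded, that it contains the unit, and that it is closed under multiplication.

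First, the bi-grading. By its definition, $F_{\ast}$ maps each summand $\Hom(\Id_{\mathcal{A}},\S^{m}_{\mathcal{A}}[n])$ to itself. Hence an element $a=\sum_{m,n}a_{m,n}$ satisfies $F_{\ast}(a)=a$ if and only if $F_{\ast}(a_{m,n})=a_{m,n}$ for every $(m,n)$, because the homogeneous components can be compared one at a time. This gives
$$\HS(\mathcal{A})^{\rho(F)}=\bigoplus_{m,n}\Hom(\Id_{\mathcal{A}},\S^{m}_{\mathcal{A}}[n])^{F_{\ast}},$$
so the invariant subspace is a bi-graded $k$-subspace. It is a linear subspace because $F_{\ast}$ is $k$-linear: each arrow in its defining composition is given by composition with fixed morphisms.

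Second, the unit. The unit of $\HS(\mathcal{A})$ is $\mathrm{id}\in\Hom(\Id_{\mathcal{A}},\Id_{\mathcal{A}})$. For $m=n=0$ the commutator $\sigma_{F,0,0}$ is the identity, so $F_{\ast}(\mathrm{id})=\eta_{F}^{-1}\circ\eta_{F}=\mathrm{id}$. Alternatively, this is automatic because $F_{\ast}$ is an algebra automorphism.

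Third, closure under products. Let $a,b$ be invariant homogeneous elements. The multiplicativity $F_{\ast}(a\cdot b)=F_{\ast}(a)\cdot F_{\ast}(b)$ proved in the theorem then gives $F_{\ast}(a\cdot b)=a\cdot b$. Closure for general elements follows by bilinearity.

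None of these steps is a real obstacle; all the work is in the multiplicativity diagram of the preceding theorem, which we may assume. The only point that needs care is the claim that $F_{\ast}$ preserves each bi-degree. This is immediate from the construction, since $\sigma_{F,m,n}$ takes $F\circ\S^{m}_{\mathcal{A}}[n]\circ F^{-1}$ back to $\S^{m}_{\mathcal{A}}[n]$.
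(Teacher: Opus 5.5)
Your proposal is correct and follows the paper's argument. The paper's proof is exactly your third step: closure under products via $\rho(F)(a\cdot b)=\rho(F)(a)\cdot\rho(F)(b)=a\cdot b$, using the preceding theorem. Your checks that the invariant subspace is a bi-graded linear subspace containing the unit are details the paper leaves implicit.
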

\begin{proof}
    For any $a,b \in \HS(\mathcal{A})^{\rho(F)}$, 
    $\rho(F)(a\cdot b)=\rho(F)(a)\cdot \rho(F)(b)=a\cdot b$. It implies $a\cdot b \in \HS(\mathcal{A})^{\rho(F)}$. Thus, $\HS(\mathcal{A})^{\rho(F)}$ is a sub-bi-graded algebra of $\HS(\mathcal{A})$. 
\end{proof}
Given $\X$ a smooth projective variety, a natural subject is a pair $(\X,\K_{\X})$, where $\K_{\X}$ is the canonical divisor. Similar with the philosophy, for a smooth proper $dg$ category $\mathcal{A}$, we have a pair $(\mathcal{A}, \S_{\mathcal{A}})$. We are interested in the invariant sub-algebra of Hochschild-Serre algebra under Serre functor, $\HS(\mathcal{A})^{\rho(\S^{\ast}_{\mathcal{A}}[\ast])}$.
 If $F: \mathcal{A}\cong \mathcal{B}$ is a Morita equivalence of smooth proper dg categories, we have a commutator isomorphism $F\circ \S_{\mathcal{A}}\circ F^{-1}\cong \S_{\mathcal{B}}$. Given any $a\in \Hom(\Id_{\mathcal{A}},\S_{\mathcal{A}}^{m}[n])$, we define $F_{\ast}(a)\in \Hom(\Id_{\mathcal{B}},\S^{m}_{\mathcal{B}}[n])$ to be 
 $$\xymatrix{\Id_{\mathcal{B}}\ar[r]&F\circ \Id_{\mathcal{A}}\circ F^{-1}\ar[r]^{a}&F\circ \S_{\mathcal{A}}^{m}[n]\circ F^{-1}\ar[r]&\S_{\mathcal{B}}^{m}[n]}.$$
 This defines an isomorphism of Hochschilld-Serre algebra\cite{belmans2023hochschild}\cite{lin2024serre},
 $$\HS(\mathcal{A})\cong \HS(\mathcal{B}).$$
\begin{thm}\label{moritainvariant}
  If $F: \mathcal{A}\cong \mathcal{B}$ is a Morita equivalence of smooth proper dg categories, then $F_{\ast}$ induces an isomorphism of bi-graded algebras for any integer $i$ and $t$,
  $$\HS(\mathcal{A})^{\rho(S^{i}_{\mathcal{A}}[t])}\cong \HS(\mathcal{B})^{\rho(S^{i}_{\mathcal{B}}[t])}.$$
\end{thm}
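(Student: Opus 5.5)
The plan is to show that the isomorphism $F_{\ast}\colon \HS(\mathcal{A})\to\HS(\mathcal{B})$ intertwines the two actions:
$$F_{\ast}\circ \rho(\S^{i}_{\mathcal{A}}[t]) = \rho(\S^{i}_{\mathcal{B}}[t])\circ F_{\ast}.$$
Once this is established, $F_{\ast}$ sends $\rho(\S^{i}_{\mathcal{A}}[t])$-invariant elements to $\rho(\S^{i}_{\mathcal{B}}[t])$-invariant elements. The inverse $(F^{-1})_{\ast}=F_{\ast}^{-1}$ satisfies the analogous intertwining relation, so it does the reverse. Since $F_{\ast}$ is already known to be an isomorphism of bi-graded algebras \cite{belmans2023hochschild}\cite{lin2024serre}, its restriction is an isomorphism of the invariant sub-bi-graded algebras (Corollary~\ref{invariant}).

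For the intertwining, I would first prove a general functoriality statement. Let $G\colon\mathcal{B}\to\mathcal{C}$ be a second Morita equivalence. Then $(G\circ F)_{\ast}=G_{\ast}\circ F_{\ast}$, with the proof copied from Lemma~\ref{associativecommutator}. The commutator isomorphism $(GF)\S_{\mathcal{A}}(GF)^{-1}\cong \S_{\mathcal{C}}$ is canonically the composite of $G(F\S_{\mathcal{A}}F^{-1})G^{-1}\cong G\S_{\mathcal{B}}G^{-1}$ and $G\S_{\mathcal{B}}G^{-1}\cong\S_{\mathcal{C}}$, both built from the Shklyarov/Polishchuk identification of $\mathcal{A}^{\vee}$ as in the Proposition. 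The unit maps $\eta$ compose in the same way. Now set $\Phi=\S^{i}_{\mathcal{B}}[t]$ and $\Psi=\S^{i}_{\mathcal{A}}[t]$. There is the commutator isomorphism $\sigma\colon F\circ\Psi\cong\Phi\circ F$ of functors $\mathcal{A}\to\mathcal{B}$, obtained by iterating $F\S_{\mathcal{A}}\cong\S_{\mathcal{B}}F$. By functoriality,
$$\rho(\Phi)\circ F_{\ast}=(\Phi\circ F)_{\ast}, \qquad F_{\ast}\circ\rho(\Psi)=(F\circ\Psi)_{\ast}.$$
So it suffices to prove that isomorphic equivalences $F\cong F'$ induce the same map, i.e.\ $F_{\ast}=F'_{\ast}$.

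For this last step, take an isomorphism $\phi\colon F\cong F'$ of bimodules. It induces $\phi\otimes(\phi^{-1})^{\vee}\colon F\circ(-)\circ F^{-1}\cong F'\circ(-)\circ F'^{-1}$. This transformation is natural in the bimodule argument, so it commutes with $a$, and it is compatible with the units $\eta_F,\eta_{F'}$. It remains to check that it is compatible with the commutator isomorphisms $\sigma_F,\sigma_{F'}$. That holds because $\sigma_F$ is built functorially from $F$: via $F\otimes_k F^{-1}$ and the canonical isomorphism $(M\otimes N)^{\vee}\cong\ldots$ of \cite[Lemma 1.2]{polishchuk2014lefschetz}, both natural in $F$.

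The main obstacle is exactly this naturality or coherence. One must verify that the commutator isomorphism $\sigma$ depends naturally on $F$, and that the composite commutator for $\Phi\circ F$ agrees with $\sigma_{\Phi}\circ\sigma_{F}$ under the chosen identification $F\Psi\cong\Phi F$. A subtle point is that $\rho(\S_{\mathcal{B}})$ uses the commutator of $\S_{\mathcal{B}}$ with itself, which need not a priori be the identity. My approach is to express every commutator as the image of the canonical bimodule $\mathcal{A}^{\vee}$ (respectively $\mathcal{A}^{\ast}$) under the equivalence $F\otimes_k F^{-1}$ of the Lemma. Coherence then reduces to the associativity of the derived tensor product and the naturality of the duality isomorphism, and the diagrams would be checked as in the proof that $\rho$ is multiplicative.
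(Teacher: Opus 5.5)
Your argument is correct, but it is organized differently from the paper's proof.

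\textbf{The paper's route.} It fixes an element $a$ that is already $\rho(\S_{\mathcal{A}})$-invariant and runs a single diagram chase (Lemmas~\ref{lem1}--\ref{lem3}). It moves $\S_{\mathcal{B}}$ past $F$ using associativity of commutators (Lemma~\ref{associativecommutator}), inserts the invariance square of $a$ under $\S_{\mathcal{A}}$ in the middle, and reads off $\rho(\S_{\mathcal{B}})(F_{\ast}a)=F_{\ast}a$. It only proves the inclusion $F_{\ast}(\HS(\mathcal{A})^{\rho(\S_{\mathcal{A}})})\subset\HS(\mathcal{B})^{\rho(\S_{\mathcal{B}})}$, and leaves the reverse inclusion and general $(i,t)$ to ``similarly''.

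\textbf{Your route.} You prove the stronger equivariance identity $F_{\ast}\circ\rho(\S^{i}_{\mathcal{A}}[t])=\rho(\S^{i}_{\mathcal{B}}[t])\circ F_{\ast}$ on all of $\HS(\mathcal{A})$, not just on invariant elements. You factor it through two general principles:
\begin{enumerate}
\item functoriality $(G\circ F)_{\ast}=G_{\ast}\circ F_{\ast}$, i.e.\ the analogue of Lemma~\ref{associativecommutator} for equivalences between different categories;
\item invariance of $F_{\ast}$ under isomorphism of kernels,
\end{enumerate}
applied to $\S^{i}_{\mathcal{B}}[t]\circ F\cong F\circ\S^{i}_{\mathcal{A}}[t]$.

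\textbf{What each buys.} Your route gives the reverse inclusion for free via $F^{-1}$ and treats all $(i,t)$ uniformly. The same argument also proves the Remark after the theorem, for any pair of autoequivalences intertwined by $F$. The price is an explicit extra input: the commutator isomorphism $\sigma_{F}$ must be natural in $F$. The paper never states this. However, its Lemmas~\ref{lem1} and~\ref{lem2} rely on the same kind of commutator coherence when they compare $\sigma_{\S_{\mathcal{B}}}$ with $\sigma_{\S_{\mathcal{A}}}$ through $F$. So your version exposes an ingredient the paper uses tacitly rather than introducing a genuinely new one.
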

\begin{proof}
 For simplicity, we assume $i=t=0$. The proof for general $i$ and $t$ is similar.
 It suffices to prove $F_{\ast}(\HS(\mathcal{A})^{\rho(\S_{\mathcal{A}})})\subset \HS(\mathcal{B})^{\rho(\S_{\mathcal{B}})}$. Consider $a\in \Hom(\Id_{\mathcal{A}},\S_{\mathcal{A}}^{m}[n])^{\rho(\S_{\mathcal{A}})}$. To prove $F_{\ast}(a)\in \Hom(\Id_{\mathcal{B}},\S^{m}_{\mathcal{B}}[n])^{\rho(\S_{\mathcal{B}})}$, we break into several lemmas. 
 \begin{lem}\label{lem1}
  There is a commutative diagram,
$$\xymatrix{\S_{\mathcal{B}}\circ \Id_{\mathcal{B}}\circ \S^{-1}_{\mathcal{B}}\ar[r]\ar[d]&S_{\mathcal{B}}\circ F\circ \Id_{\mathcal{A}}\circ F^{-1}\circ \S^{-1}_{\mathcal{B}}\ar[d]\\
 \Id_{\mathcal{B}}\circ \S_{\mathcal{B}}\circ \S^{-1}_{\mathcal{B}}\ar[r]&F\circ \Id_{\mathcal{A}}\circ F^{-1}\circ \S_{\mathcal{B}}\circ \S^{-1}_{\mathcal{B}}}.$$
 \end{lem}
 \begin{proof}
  This follows from Lemma~\ref{associativecommutator}.   
 \end{proof}
 \begin{lem}\label{lem2}
  There is a commutative diagram,
  $$\xymatrix{\S_{\mathcal{B}}\circ F\circ \Id_{\mathcal{A}}\circ F^{-1}\circ \S^{-1}_{\mathcal{B}}\ar[r]^{a}\ar[d]&\S_{\mathcal{B}}\circ F\circ \S^{m}_{\mathcal{A}}[n]\circ F^{-1}\circ \S^{-1}_{\mathcal{B}}\ar[d]\\
 F\circ \Id_{\mathcal{A}}\circ F^{-1}\circ \S_{\mathcal{B}}\circ \S^{-1}_{\mathcal{B}}\ar[r]^{a}\ar[d]&F\circ \S^{m}_{\mathcal{A}}[n]\circ F^{-1}\circ \S_{\mathcal{B}}\circ \S^{-1}_{\mathcal{B}}\ar[d]\\
 F\circ \S^{m}_{\mathcal{A}}[n]\circ F^{-1}\circ \S_{\mathcal{B}}\circ \S^{-1}_{\mathcal{B}}\ar[r]&\S_{\mathcal{B}}^{m}[n]}.$$
 \end{lem}
 \begin{proof}
  The second commutative diagram is trivial. For the first commutative diagram, we use Lemma~\ref{associativecommutator} and the fact that $a$ is invariant under the action of $\S_{\mathcal{A}}$. Namely, we have a commutative diagram,
  $$\xymatrix{\S_{\mathcal{A}}\circ \Id_{\mathcal{A}}\ar[d]^{id\circ a}\ar[r]&\Id_{\mathcal{A}}\circ \S_{\mathcal{A}}\ar[d]^{a\circ id}\\
  \S_{\mathcal{A}}\circ \S^{m}_{\mathcal{A}}[n]\ar[r]&\S^{m}_{\mathcal{A}}[n]\circ \S_{\mathcal{A}}}.$$
 \end{proof}
\begin{lem}\label{lem3}
There is a commutative diagram
$$\xymatrix{\Id_{\mathcal{B}}\ar[r]\ar[rd]_{\eta_{1}}&\S_{\mathcal{B}}\circ \Id_{\mathcal{B}}\circ \S^{-1}_{\mathcal{B}}\ar[r]\ar[d]&\S_{\mathcal{B}}\circ F\circ \Id_{\mathcal{A}}\circ F^{-1}\circ \S^{-1}_{\mathcal{B}}\ar[r]^{a}\ar[d]&\S_{\mathcal{B}}\circ F\circ \S^{m}_{\mathcal{A}}[n]\circ F^{-1}\circ \S^{-1}_{\mathcal{B}}\ar[d]\\
 &\Id_{\mathcal{B}}\circ \S_{B}\circ \S^{-1}_{\mathcal{B}}\ar[r]_{\eta_{2}}&F\circ \Id_{\mathcal{A}}\circ F^{-1}\circ \S_{\mathcal{B}}\circ \S^{-1}_{\mathcal{B}}\ar[r]^{a}\ar[d]_{\eta_{3}}&F\circ \S^{m}_{\mathcal{A}}[n]\circ F^{-1}\circ \S_{\mathcal{B}}\circ \S^{-1}_{\mathcal{B}}\ar[d]\\
 &&F\circ \S^{m}_{\mathcal{A}}[n]\circ F^{-1}\circ \S_{\mathcal{B}}\circ \S^{-1}_{\mathcal{B}}\ar[r]^{\eta_{4}}&\S_{\mathcal{B}}^{m}[n]}.$$ 
 In particular, the composition $\eta_{4}\circ \eta_{3}\circ \eta_{2}\circ\eta_{1}=F_{\ast}(a)$, and the compositions of upper row and the fourth colum is $\rho(\S_{\mathcal{B}})(F_{\ast}(a))$.
\end{lem}
\begin{proof}
  The diagram is commutative because of Lemma~\ref{lem1} and Lemma~\ref{lem2}. $\eta_{4}\circ \eta_{3}\circ \eta_{2}\circ\eta_{1}=F_{\ast}(a)$ by definition. The last statement follows from definition.
\end{proof}
Finally, by Lemma~\ref{lem3}, $F_{\ast}(a)=\rho(\S_{\mathcal{B}})(F_{\ast}(a))$. In other words,  $F_{\ast}(\HS(\mathcal{A})^{\S_{\mathcal{A}}})\subset \HS(\mathcal{B})^{\mathcal{\S_{B}}}$. 
\end{proof}
\begin{rem}
 Take functors $F_{1}\in\Aut_{M}(\mathcal{A})$ and $F_{2}\in\Aut_{M}(\mathcal{B})$. If the Morita equivalence $F$ commutes with $F_{1}$ and $F_{2}$, then we have an isomorphism of bi-graded algebra,
 $$\bigoplus_{m,n}\Hom(\Id,F^{m}_{1}[n])^{\rho(F^{i}_{1}[t])}\cong \bigoplus_{m,n}\Hom(\Id,F_{1}^{m}[n])^{\rho(F^{i}_{2}[t])}.$$
 The proof is similar.
\end{rem}

\subsection*{Example}
Let $f$ be a degree $d$ quasi-homogeneous polynomial functions $f: \mathbb{C}^{n+1}\rightarrow \mathbb{C}$. Consider the dg category of matrix factorization category $\mathcal{A}=\MF(\mathbb{C}^{n+1},\mathbb{C}^{\ast},f)$. There is a twisted functor $\{1\}= -\otimes \mathcal{O}(1)$, and $\{1\}^{d}\cong[2]$. We write $\Delta(t)$ as the Fourier-Mukai Kernel for the functor $\{t\}=-\otimes \mathcal{O}(t)$.
\begin{prop}\cite[Theorem 5.39]{BFK}\label{extendedhochschild}
 If $f$ has an isolated singularity exactly at $0\in \mathbb{A}^{n+1}$, then 
\begin{align*}
 \Hom(\Delta,\Delta(t)[m])\cong& \bigoplus_{\{g\in \mu_{d}\ \mid\  \operatorname{rk}W_{g} \equiv m\pmod2\}}\mathrm{Jac}(f_{g})_{t-k_{g}+d\left(\frac{m-\operatorname{rk}W_{g}}{2}\right)}
 \end{align*}
\end{prop}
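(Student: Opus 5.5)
This statement is quoted from Ballard--Favero--Katzarkov (\cite[Theorem 5.39]{BFK}). The plan is to reproduce their computation of Hochschild-type invariants of equivariant matrix factorization categories via the orbifold (twisted) HKR decomposition, not to reprove it from scratch.

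\textbf{Step 1: identify the kernel and reduce to the product.}
The kernel of $\{t\}$ is the diagonal factorization $\Delta(t)$ in $\MF(\mathbb{C}^{n+1}\times\mathbb{C}^{n+1},\mathbb{C}^{\ast}\times_{\mathbb{C}^{\ast}}\mathbb{C}^{\ast},-f\boxplus f)$. I would compute $\Hom(\Delta,\Delta(t)[m])$ as the $\mathbb{C}^{\ast}$-invariant part of the morphism complex between the diagonal factorizations on $\mathbb{A}^{2n+2}$, with the twist recorded in the weight. Using the relation $\{1\}^{d}\cong[2]$, the action of $\mu_d\subset\mathbb{C}^{\ast}$ on the morphism spaces splits them into summands indexed by $g\in\mu_d$. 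This is the usual decomposition of the orbifold diagonal into components supported on the graphs of the $g$-actions.

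\textbf{Step 2: compute each summand via the Koszul resolution.}
For each $g$, I would resolve the graph of $g$ by a Koszul matrix factorization and restrict to the fixed locus $W_g=(\mathbb{C}^{n+1})^{g}$.

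\emph{Untwisted directions.} On the fixed directions one gets the Koszul complex of the partials of $f_g=f|_{W_g}$. Because $f$ has an isolated singularity, $f_g$ does too, so this complex is quasi-isomorphic to $\Jac(f_g)$ concentrated in one degree.

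\emph{Moving directions.} The $\operatorname{rk}$ of the normal directions (the codimension of $W_g$) contributes a one-dimensional determinant factor. This factor has internal weight $k_g$ and sits in cohomological degree $\operatorname{rk}W_g$ modulo $2$.

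This produces the parity condition $\operatorname{rk}W_g\equiv m \pmod 2$.

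\textbf{Step 3: bookkeeping of gradings.}
Taking $\mathbb{C}^{\ast}$-invariants of total weight $t$ and using $[2]\cong\{d\}$, the shift $[m]$ becomes an internal-degree shift by $d\,(m-\operatorname{rk}W_g)/2$, while the determinant factor contributes $-k_g$. This yields the degree $t-k_g+d\,\frac{m-\operatorname{rk}W_g}{2}$ on $\Jac(f_g)$.

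\textbf{Main obstacle.} The hardest part is Step 2: showing that the twisted sectors contribute exactly the Jacobian ring of $f_g$ with the correct determinant twist $k_g$. This requires a careful Koszul-duality/HKR argument for the $g$-twisted diagonal, together with the isolated-singularity hypothesis to ensure the Koszul complexes are concentrated in a single degree. For that step I would rely on \cite{BFK}.
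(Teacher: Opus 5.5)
The paper gives no argument for this proposition; it is quoted directly from \cite[Theorem 5.39]{BFK}. Your proposal also defers the substantive twisted-sector computation to BFK, so it follows the same route, but two claims in your outline are inaccurate.

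\textbf{Source of the splitting.} The $g$-summands do not come from $\{1\}^{d}\cong[2]$ or from an action of $\mu_d\subset\mathbb{C}^{\ast}$ on the morphism spaces. Morphisms in the equivariant category are invariants of the whole group $\mathbb{C}^{\ast}\times_{\mathbb{C}^{\ast}}\mathbb{C}^{\ast}$, so $\mu_d$ acts trivially on them. The splitting comes instead from $\Delta$ being induced from the diagonal $\mathbb{C}^{\ast}$, whose cosets in $\mathbb{C}^{\ast}\times_{\mathbb{C}^{\ast}}\mathbb{C}^{\ast}$ are indexed by $\mu_d$. Adjunction then gives $\Hom(\Delta,\Delta(t)[m])\cong\bigoplus_{g\in\mu_d}\Hom_{\mathbb{C}^{\ast}}(\mathcal{O}_{\Delta},\mathcal{O}_{\Gamma_g}(t)[m])$, with $\mathcal{O}_{\Delta}$ replaced by its Koszul factorization. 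This is the graph decomposition you mention at the end of Step~1. Identifying the $g$-summand with the $g$-eigenspace of $\rho(\{1\})$ is Theorem~\ref{eigenspacedecomposition}, which the paper deduces \emph{from} this proposition, so it cannot be the source of the splitting.

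\textbf{Parity and the meaning of $\operatorname{rk}W_g$.} With your definition $W_g=(\mathbb{C}^{n+1})^{g}$, the determinant of the normal directions sits in degree $\operatorname{codim}W_g=n+1-\dim W_g$, not in degree $\operatorname{rk}W_g$. These have opposite parity whenever $n+1$ is odd. The paper's own example in Theorem~\ref{kernelgamma} shows which reading is forced. There $\mathbb{P}(1,1,1,1,2)$ gives $n+1=5$, and $g=-1$ fixes only the $x_4$-line yet contributes $\Jac(\omega\vert_{-1})_{0}$ to $\HH^{2}$. So $\operatorname{rk}W_g$ in the displayed formula must be read as the rank of the $g$-moving subspace, and $k_g$ as the weight of its determinant.

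With these two corrections your outline is consistent with BFK's argument.
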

 $\{1\}^{d}\cong[2]$ implies the eigenvalues of the map $\rho(\{1\})$ at $\Hom(\Delta,\Delta(t)[m])$ is $1, \xi, \xi^{2}, \cdots, \xi^{d-1}$, where $\xi=\exp{\frac{2\pi i}{d}}$. 
 \begin{thm}\label{eigenspacedecomposition}
  The component $\Jac(f_{g})_{\bullet}$ in Proposition \ref{extendedhochschild} is the eigenspace of $\rho(\{1\})$ with eigenvalue $g$.
 \end{thm}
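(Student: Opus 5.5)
The plan is to compute $\rho(\{1\})$ directly in the model that \cite{BFK} use to prove Proposition~\ref{extendedhochschild}, and to see that on the summand indexed by $g\in\mu_{d}$ it is multiplication by the scalar $\chi(g)=g$. Here $\chi$ is the character of $\mathbb{C}^{\ast}$ defining $\mathcal{O}(1)$.

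\textbf{Step 1: recall the BFK model.} The kernel $\Delta(t)$ is $\mathcal{O}_{\Delta}\otimes\chi^{t}$ in $\MF(\mathbb{C}^{n+1}\times\mathbb{C}^{n+1},\mathbb{C}^{\ast}\times_{\mathbb{G}_m}\mathbb{C}^{\ast},-f\boxplus f)$. In \cite[Section 5]{BFK}, $\Hom(\Delta,\Delta(t)[m])$ is computed by rewriting the kernel category as a category of equivariant factorizations with the diagonal group acting. One then takes $\mathbb{C}^{\ast}$-invariants of a sum over group elements $g$ of contributions supported on the graph $\Gamma_{g}$ of $g$. Only $g\in\mu_{d}$ contribute, since these are the elements whose fixed locus $W_{g}$ carries the restricted function $f_{g}$. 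The $g$-summand is the Koszul-type computation giving $\Jac(f_{g})$ shifted by $k_{g}$ and $\operatorname{rk}W_{g}$.

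\textbf{Step 2: express $\rho(\{1\})$ as a twist of the kernel.} By definition, $\rho(\{1\})(a)$ for $a:\Delta\to\Delta(t)[m]$ is obtained by conjugating: $\Delta(1)\circ a\circ\Delta(-1)$, followed by the commutator isomorphism $\sigma$ identifying $\Delta(1)\circ\Delta(t)[m]\circ\Delta(-1)$ with $\Delta(t)[m]$. On kernels, convolution with $\Delta(1)$ and $\Delta(-1)$ amounts to tensoring with $\mathcal{O}(1)\boxtimes\mathcal{O}(-1)$ on $\mathbb{C}^{n+1}\times\mathbb{C}^{n+1}$. So $\rho(\{1\})$ is induced by the functor $-\otimes(\chi\boxtimes\chi^{-1})$ on the kernel category, together with the canonical trivialization of this twist on $\Delta(t)$. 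That trivialization exists because $\chi\boxtimes\chi^{-1}$ restricts to the trivial character on the diagonal.

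\textbf{Step 3: compute the scalar on each summand.} The $g$-summand is supported on $\Gamma_{g}=\{(x,gx)\}$. The restriction of $\mathcal{O}(1)\boxtimes\mathcal{O}(-1)$ to $\Gamma_{g}$ is trivial as a line bundle. However, its identification with the restriction of the diagonal trivialization differs by the action of $g$ on the fibre of $\mathcal{O}(1)$, that is, by $\chi(g)=g$. Hence $\rho(\{1\})$ acts on $\Jac(f_{g})_{\bullet}$ by multiplication by $g$ (possibly $g^{-1}$, depending on conventions). Since the summands for distinct $g$ carry distinct scalars, and all $d$-th roots of unity occur, each summand is precisely the corresponding eigenspace. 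This is consistent with the remark that $\{1\}^{d}\cong[2]$ forces the eigenvalues to lie in $\mu_{d}$.

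\textbf{Main obstacle and sanity check.} The hard step is Step 3. One must check that the commutator isomorphism $\sigma_{\{1\},\ast,\ast}$, defined abstractly through the Serre-functor bimodule argument, agrees with the naive trivialization of $\chi\boxtimes\chi^{-1}$ on each $\Gamma_{g}$. One must also check that this is compatible with the $\mathbb{C}^{\ast}$-invariants and the Koszul resolutions used in \cite{BFK}.

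I would first verify the claim on the untwisted sector $g=1$. There $\Jac(f)$ is the ordinary Hochschild cohomology, and $\rho(\{1\})$ must act trivially, because any autoequivalence acts trivially on natural transformations $\Id\to\Id$ in this way. This fixes the normalization. For general $g$, I would then compare the two trivializations locally at a point of $W_{g}$, where both reduce to the action of $g$ on the one-dimensional character $\chi$. That comparison also decides between $g$ and $g^{-1}$; the latter only relabels the eigenspaces.
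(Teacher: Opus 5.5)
Your argument is correct in substance, but it takes a different route from the paper.

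\textbf{The paper's route.} The paper never computes $\rho(\{1\})$ directly on $\Hom(\Delta,\Delta(t)[m])$. It conjugates by $\S^{-1}$ (Lemma~\ref{lem4}). It then passes to the trace through $\Tr(M)\cong\Hom(\S^{-1},M)$, which intertwines the two $\{1\}_{\ast}$-actions by Polishchuk's functoriality of traces (Lemma~\ref{lem5}). It imports the eigenvalue $g$ on twisted sectors from the Polishchuk--Vaintrob computation of Hochschild homology (Lemma~\ref{lem6}). Finally it needs Lemma~\ref{lem7} to match that decomposition with the one in Proposition~\ref{extendedhochschild}.

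\textbf{Your route.} You stay on the cohomology side and read the eigenvalue off BFK's own decomposition. $\Delta$ is induced from the diagonal $\mathbb{C}^{\ast}$ to $\mathbb{C}^{\ast}\times_{\mathbb{G}_m}\mathbb{C}^{\ast}\cong\mathbb{C}^{\ast}\times\mu_{d}$. This coset structure, not a selection among all $g\in\mathbb{C}^{\ast}$, is the real reason the index set is $\mu_d$. Your twist $\chi\boxtimes\chi^{-1}$ is trivial on the diagonal, and the projection-formula isomorphism acts on the coset summand $\mathcal{O}_{\Gamma_g}$ by $\chi(g)$.

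\textbf{Comparison.} The paper outsources the scalar computation to existing results on Hochschild homology. Your argument is self-contained, explains why the eigenvalue is $g$, and makes Lemma~\ref{lem7} unnecessary, since you never leave BFK's decomposition. It also removes most of the obstacle you flag. On $\Hom(\Delta,\Delta(t)[m])$ the commutator entering $\rho(\{1\})$ is $\Delta(1)\circ\Delta(t)\circ\Delta(-1)\cong\Delta(t)$, built from $\Delta(a)\circ\Delta(b)\cong\Delta(a+b)$, and this is your trivialization by construction. The Serre-functor commutator enters only the paper's route, through $\operatorname{Con}_{\S^{-1}}$.

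\textbf{What remains.} You still need to check that BFK's Koszul and invariants computation is equivariant for the twist. You also need to fix conventions: which factor carries $\mathcal{O}(1)$, and whether $\Gamma_g$ is $\{(x,gx)\}$ or $\{(gx,x)\}$. An answer of $g^{-1}$ would contradict the statement as written rather than merely relabel it, although the paper's applications only use $g\neq 1$ and $g=-1$.

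\textbf{Your sanity check.} It rests on a false principle. Autoequivalences do not in general act trivially on $\Hom(\Id,\Id[\ast])$. The theorem itself gives nontrivial eigenvalues on twisted sectors of Hochschild cohomology. The paper uses exactly this in Theorem~\ref{kernelgamma}, where $\{1\}_{\ast}\alpha=-\alpha$ on the $\Jac(\omega\vert_{-1})_{0}$ summand of $\HH^{2}$. Also, $\Hom(\Delta,\Delta(t)[m])$, and hence its $g=1$ summand, is a Hochschild cohomology group only when $d\mid t$. Triviality on the untwisted sector follows from your Step 3, since the trivialization is the identity on $\mathcal{O}_{\Delta}$. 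For a normalization check, use instead that $F_{\ast}$ is a unital algebra map and so fixes $\mathrm{id}_{\Delta}$. Because Step 3 handles all $g$ at once, this flaw does not damage the proof.
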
   
\begin{proof}
We break the proof into several lemmas.
First, we have a natural isomorphism $\operatorname{Con}_{\S^{-1}}: \Hom(\Delta.\Delta(t)[m])\rightarrow \Hom(\S^{-1}\circ\Delta,\S^{-1}\circ\Delta(t)[m])$ defined as $\operatorname{Con}_{\S^{-1}}(a)=\S^{-1}\circ a$. For $b\in \Hom(\S^{-1}\circ \Delta,\S^{-1}\circ\Delta(t)[m])$, we define $\{1\}_{\ast}(b)$ as the composition,
$$\xymatrix{\S^{-1}\circ \Delta\ar[r]&\Delta(1)\circ \Delta(-1)\circ \S^{-1}\circ \Delta\ar[r]&\Delta(1)\circ \S^{-1}\circ \Delta\circ\Delta(-1)\ar[r]&\Delta(1)\circ \S^{-1}\circ \Delta(t)[m]\circ \Delta(-1)\ar[d]\\
&&&\S^{-1}\circ \Delta(t)[m]\circ \Delta(1)\circ \Delta(-1)\ar[d]\\
&&&\S^{-1}\circ \Delta(t)[m]}.$$
 \begin{lem}\label{lem4}
     We have a commutative diagram,
     $$\xymatrix@C=3cm{\Hom(\Delta,\Delta(t)[m])\ar[r]^{Con_{\S^{-1}}}\ar[d]^{\{1\}_{\ast}}& \Hom(S^{-1}\circ \Delta,\S^{-1}\circ \Delta(t)[m])\ar[d]^{\{1\}_{\ast}}\\
    \Hom(\Delta,\Delta(t)[m])\ar[r]^{Con_{\S^{-1}}}& \Hom(\S^{-1}\circ \Delta,\S^{-1}\circ \Delta(t)[m])}.$$ 
 \end{lem}

 \begin{proof}
    It is commutative by associativity of commutators, namely Lemma~\ref{associativecommutator}.  
 \end{proof}
 
 For any $dg$ category $\mathcal{B}$, we have trace functor $\Tr: \Per(\mathcal{B}\otimes \mathcal{B}^{op})\rightarrow \Per(k)$ defined by $\mathcal{B}\otimes_{\mathcal{B}\otimes\mathcal{B}^{op}} -$, and for any $M\in \Per(\mathcal{B}\otimes \mathcal{B})$, we have an isomorphism $\xymatrix{\Tr(M)\ar[r]^{ev}& \Hom(\S^{-1}_{\mathcal{B}}, M)}$, see for example \cite[Section 1.3]{polishchuk2014lefschetz}. Back to the matrix factorization category $\mathcal{A}$, the functor $\{1\}_{\ast}$ at $\Tr(S^{-1}\circ \Delta(t)[m])$ is defined by the composition below, see \cite[Section 2.1]{polishchuk2014lefschetz} for the definition in a more general setting,
 $$\xymatrix{\Tr(\S^{-1}\circ \Delta(t)[m])\ar[r]& \Tr(\Delta(1)\circ \Delta(-1)\circ \S^{-1}\circ \Delta(t)[m])\ar[r]&\Tr(\Delta(1)\circ \S^{-1}\circ \Delta(t)[m]\circ \Delta(-1))\ar[d]\\
 && \Tr(S^{-1}\circ \Delta(t)[m]\circ \Delta(1)\circ \Delta(-1))\ar[d]\\
 &&\Tr(\S^{-1}\circ \Delta(t)[m])}.$$
 \begin{lem}\label{lem5}
     There is a commutative diagram,
     $$\xymatrix@C=3cm{\Tr(\S^{-1}\circ \Delta(t)[m])\ar[r]^{ev}\ar[d]^{\{1\}_{\ast}}& \Hom(\S^{-1}\circ \Delta,\S^{-1}\circ \Delta(t)[m])\ar[d]^{\{1\}_{\ast}}\\
    \Tr(\S^{-1}\circ \Delta(t)[m])\ar[r]^{ev}& \Hom(\S^{-1}\circ \Delta,\S^{-1}\circ \Delta(t)[m])}.$$ 
 \end{lem}

 \begin{proof}
     This follows by \cite[Proposition 2.10]{polishchuk2014lefschetz}.
 \end{proof}
 
  \begin{lem}\label{lem6}
     We have an isomorphism 
     $$\Tr(\S^{-1}\circ \Delta(t)[m])\cong \bigoplus_{\{g\in \mu_{d}\ \mid\  \operatorname{rk}W_{g} \equiv m\pmod2\}}\mathrm{Jac}(f_{g})_{t-k_{g}+d\left(\frac{m-\operatorname{rk}W_{g}}{2}\right)}.$$
     The component $\Jac(f_{g})_{\bullet}$ is the eigenspace with eigenvalue $g$ under the action of $\{1\}$. Namely, for $a_{g}\in \Jac(f_{g})$, $\{1\}_{\ast}(a_{g})=g\cdot a_{g}$. 
 \end{lem}

 \begin{proof}
     For the case of Hochschild homology, this was proved in \cite[Theorem 2.6.1]{polishchuk2016matrix}. The general case is similar.
 \end{proof}
 
 \begin{lem}\label{lem7} The compositions in the isomorphisms below 
  $$\Hom(\Delta,\Delta(t)[m])\cong \Tr(\S^{-1}\circ \Delta(t)[m])\cong \bigoplus_{\{g\in \mu_{d}\ \mid\  \operatorname{rk}W_{g} \equiv m\pmod2\}}\mathrm{Jac}(f_{g})_{t-k_{g}+d\left(\frac{m-\operatorname{rk}W_{g}}{2}\right)}$$
    is the isomorphism in Proposition~\ref{extendedhochschild}. 
 \end{lem}
 
 \begin{proof}
   This is because both decompositions are from the decomposition of diagonal factorizations $\Delta$. The case for Hochschild homology is obtained in \cite[Theorem 6.5]{BFK}, the general case is similar.
 \end{proof}
 Now, by Lemma~\ref{lem6} and Lemma~\ref{lem7}, the statement follows. 
\end{proof}

\begin{thm}\label{subringjacobian}
If $\gcd(d, \sum^{n}_{i=0}a_{i})=1$, then there are integers $m_{0}$, $n_{0}$, and an isomorphism of algebra,
$$\Jac(f)\cong \bigoplus^{N}_{t}\Hom(\Id,\S^{m_{0}t}[n_{0}t])^{\S_{\mathcal{A}}[-n-1]}\subset \HS(\MF(\mathbb{C}^{n+1},\mathbb{C}^{\ast},f))^{\S_{\mathcal{A}}[-n-1]}.$$
Here $N$ is the maximal integer with $\Jac(f)_{N}\neq 0$.
\end{thm}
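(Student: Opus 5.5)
The plan is to realize $\Jac(f)$ inside the untwisted ($g=1$) sector of Proposition~\ref{extendedhochschild}, and to use Theorem~\ref{eigenspacedecomposition} to show that this sector is exactly the invariant part under $\S_{\mathcal{A}}[-n-1]$.

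\textbf{Step 1: the Serre functor as a twist.} For $\mathcal{A}=\MF(\mathbb{C}^{n+1},\mathbb{C}^{\ast},f)$, I will use the standard description from \cite{BFK}:
$$\S_{\mathcal{A}}\cong \{-\textstyle\sum_{i}a_{i}\}[n+1].$$
Hence $\S_{\mathcal{A}}[-n-1]\cong\{1\}^{-\sum a_i}$. Since $\rho$ is a group homomorphism, $\rho(\S_{\mathcal{A}}[-n-1])=\rho(\{1\})^{-\sum a_i}$. Both sides are kernels of the form $\Delta(s)[r]$, so by Lemma~\ref{associativecommutator} the commutator isomorphisms are compatible with this identification.

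\textbf{Step 2: the invariant part is the $g=1$ sector.} By Theorem~\ref{eigenspacedecomposition}, on the summand $\Jac(f_g)_\bullet$ the operator $\rho(\S_{\mathcal{A}}[-n-1])$ acts by the scalar $g^{-\sum a_i}$. Since $g\in\mu_d$ and $\gcd(d,\sum a_i)=1$, we have $g^{-\sum a_i}=1$ if and only if $g=1$. Therefore, for every $m,n$,
$$\Hom(\Id,\S^{m}[n])^{\S_{\mathcal{A}}[-n-1]}$$
is exactly the $g=1$ summand of Proposition~\ref{extendedhochschild}. This summand is a graded piece of $\Jac(f_1)=\Jac(f)$ (here $W_1=\mathbb{C}^{n+1}$ and $\operatorname{rk}W_1=n+1$), appearing when $m$ has the right parity.

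\textbf{Step 3: choosing $m_0,n_0$.} Writing $\S^{m}[n']=\Delta(-m\sum a_i)[m(n+1)+n']$, the $g=1$ summand of $\Hom(\Id,\S^{m}[n'])$ is
$$\Jac(f)_{\,-m\sum a_i-k_1+d\,(m(n+1)+n'-n-1)/2}.$$
The offset $-k_1-d(n+1)/2$ is fixed by the normalization that $\Hom(\Id,\Id)$ contains $\Jac(f)_0$, so the index is linear in $(m,n')$ with no constant term. By Bezout, using $\gcd(d,\sum a_i)=1$, I choose $m_0,n_0$ with $m_0(n+1)+n_0$ even and
$$-m_0\textstyle\sum a_i+\tfrac{d}{2}\bigl(m_0(n+1)+n_0\bigr)=1.$$
For example, take $m_0$ with $m_0\sum a_i\equiv-1\pmod d$ and then solve for $n_0$. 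With this choice the index attached to $(m_0t,n_0t)$ is exactly $t$, and the parity condition holds for every $t$. Summing over $0\le t\le N$ then gives a graded vector space isomorphism with $\Jac(f)=\bigoplus_t\Jac(f)_t$, since $\Jac(f)_t=0$ for $t>N$ and $t<0$.

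\textbf{Step 4: multiplicativity (main obstacle).} It remains to show that the composition product in $\HS$, restricted to the $g=1$ sectors, agrees with multiplication in $\Jac(f)$. By Corollary~\ref{invariant} the invariant part is already a subalgebra, so closure is automatic and only the identification of the product is at issue.

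My approach is to describe the $g=1$ component directly. A polynomial $p\in k[x]_t$ gives a morphism $\Delta\to\Delta(t)$ by multiplication by $p\otimes 1$, which is homotopic to $1\otimes p$ on the diagonal factorization. This gives a ring map $k[x]\to\bigoplus_t\Hom(\Delta,\Delta(t))$ that kills the partials $\partial_i f$, since these are null-homotopic on $\Delta$.

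The delicate point is to check that, under the trace isomorphisms of Lemma~\ref{lem7}, the image of this map is exactly the $g=1$ summand of \cite{BFK}, rather than a map mixing in twisted sectors. I would check this on the Koszul-type model of $\Delta$ used in \cite[Theorem 5.39]{BFK}, where the untwisted sector arises from the fixed locus $W_1=\mathbb{C}^{n+1}$ with its Koszul factor. Once this is established, composition of such morphisms is visibly polynomial multiplication. This gives the claimed algebra isomorphism, and the inclusion into $\HS(\MF(\mathbb{C}^{n+1},\mathbb{C}^{\ast},f))^{\S_{\mathcal{A}}[-n-1]}$ is tautological.
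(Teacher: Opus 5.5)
Your proposal is correct and follows essentially the paper's route: $\S_{\mathcal{A}}[-n-1]\cong\{-\sum a_i\}$, and $\gcd(d,\sum a_i)=1$ together with Theorem~\ref{eigenspacedecomposition} cuts the invariants down to the untwisted $g=1$ sector. Your B\'ezout choice of $(m_0,n_0)$ is the one making $\S^{m_0}[n_0]\cong\{1\}$ via $\{d\}\cong[2]$, as the paper uses, and the multiplicativity you isolate in Step~4 is simply asserted in the paper, with a pointer to the method of the earlier work. One small slip: in Proposition~\ref{extendedhochschild}, $\operatorname{rk}W_g$ must be the rank of the part moved by $g$, so $\operatorname{rk}W_1=0$ and $k_1=0$ rather than $n+1$; this is why the parity condition ``$m_0(n+1)+n_0$ even'' that you actually impose is the right one.
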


\begin{proof}
  First, according to \cite[Proposition 2.2]{lin2024ivhskuznetsovcomponentscategorical}, $\S_{\mathcal{A}}[-n-1]\cong \{-(\sum^{n}_{i=0}a_{i})\}[n+1][-n-1]\cong \{-(\sum^{n}_{i=0}a_{i})\}$. Since $\operatorname{gcd}(d,\sum^{n}_{i=0}a_{i})=1$, for any $g\neq 1 \in \mu_{d}$, $g^{-\sum^{n}_{i=0}a_{i}}\neq 1$. Therefore, $\Hom(\Delta,\Delta(t))^{S_{\mathcal{A}}[-n-1]}\cong \Jac(f)_{t}$ by Theorem~\ref{eigenspacedecomposition}. Next, similar with the method in \cite{lin2024serre}, we have some integer $m_{0}$, $n_{0}$ such that $\Hom(\Id,\S^{m_{0}t}[n_{0}t])\cong \Hom(\Delta,\Delta(t))\cong \Jac(f)_{t}$.
  Thus, we have an isomorphism of algebra,
  $$\Jac(f)\cong \bigoplus^{N}_{t=0}\Hom(\Id,\S^{m_{0}t}[n_{0}t])^{\S_{\mathcal{A}}[-n-1]}\subset \HS(\mathcal{A})^{\S_{\mathcal{A}}[-n-1]}.$$
\end{proof}
\section{Applications}
In this section, let $\X$ be a degree $d$ smooth hypersurface in $\mathbb{P}(a_{0},a_{1},\cdots,a_{n})$ defined by polynomial $f$. Assume $\operatorname{gcd}(d,\sum^{n}_{i=0}a_{i})=1$. We compute the dimension of $\HH^{2}(\Ku(\X))$, and prove the categorical reconstruction theorem for $\Ku(\X)$. We prove Theorem~\ref{matrixreconstruction}(= Theorem~\ref{matrixreconstruction2}), and give an another proof of \cite[Theorem 3.1]{linssz2024kunetsovfanoconjecture}, see Theorem~\ref{kernelgamma} below.


 \begin{thm}\label{dimequal}
 We have $\operatorname{dim} \HH^{2}(\Ku(\X))
=\operatorname{dim}\Jac(f)_{d}$. \end{thm}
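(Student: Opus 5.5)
The plan is to compute $\HH^{2}(\Ku(\X))=\Hom(\Id,\Id[2])$ twice: once through the eigenspace decomposition of Theorem~\ref{eigenspacedecomposition}, and once through the fact that $\S_{\Ku(\X)}[-n-1]$ acts trivially on Hochschild cohomology. Comparing the two answers forces all the twisted sectors to vanish and leaves only $\Jac(f)_{d}$.

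\textbf{Step 1: pass to matrix factorizations.} We identify $\Ku(\X)$ with $\mathcal{A}=\MF(\mathbb{C}^{n+1},\mathbb{C}^{\ast},f)$ as in \cite{BFK}; under the standing assumption $d\le\sum a_i$ this is a Morita equivalence. By Theorem~\ref{moritainvariant}, and since $\HH^{\ast}$ and the action $\rho$ transport along $F_{\ast}$, we may compute inside $\mathcal{A}$. There
$$\HH^{2}(\mathcal{A})=\Hom(\Delta,\Delta[2])\cong\Hom(\Delta,\Delta(d)),$$
using $\{1\}^{d}\cong[2]$. Proposition~\ref{extendedhochschild} with $t=d$ and $m=0$ (or equivalently $t=0$, $m=2$) decomposes this as
$$\bigoplus_{g\in\mu_d,\ \operatorname{rk}W_g\ \mathrm{even}}\Jac(f_g)_{\bullet}.$$
In this decomposition the $g=1$ summand is $\Jac(f)_{d}$, since $W_1$ is the whole space of rank $n+1$ and the grading shifts $k_{1}$ and $(m-\operatorname{rk}W_{1})/2$ combine to give degree $d$. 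I would double-check this bookkeeping against the $m=2$, $t=0$ normalization. If the $g=1$ sector does not appear with the correct parity, then the statement forces $\Jac(f)_{d}=0$ compatibly, so both cases need checking.

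\textbf{Step 2: the action of $\S_{\mathcal{A}}[-n-1]$.} By \cite[Proposition 2.2]{lin2024ivhskuznetsovcomponentscategorical} we have $\S_{\mathcal{A}}[-n-1]\cong\{-\sum a_i\}$. Hence, by Theorem~\ref{eigenspacedecomposition} and the homomorphism property of $\rho$, this functor acts on the $\Jac(f_g)$ summand by $g^{-\sum a_i}$. Because $\gcd(d,\sum a_i)=1$, the scalar $g^{-\sum a_i}$ equals $1$ only when $g=1$. Therefore the invariant subspace is exactly $\Jac(f)_{d}$, which is the same computation as in the proof of Theorem~\ref{subringjacobian}.

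\textbf{Step 3: triviality of the action.} By \cite[Proposition 1.2]{perry2021hochschild}, $\S_{\Ku(\X)}[-n-1]$ acts trivially on $\HH^{\ast}(\Ku(\X))$. I need this to be the same action as $\rho$ restricted to $\Hom(\Id,\Id[2])$. This holds because Perry's action is conjugation by the autoequivalence via the commutator isomorphism, which matches the definition of $F_{\ast}$ in the case $m=0$. Consequently all of $\HH^{2}$ is invariant, so $\HH^{2}=\HH^{2,\mathrm{inv}}\cong\Jac(f)_{d}$, and taking dimensions gives the result.

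\textbf{Main obstacle.} I expect the hardest part to be the compatibility in Step 3. Perry's triviality statement is formulated for Kuznetsov components via Fourier--Mukai kernels and their Hochschild cohomology action, while $\rho$ is defined through bimodule commutators. I would first try to show that both are induced by the same conjugation functor $M\mapsto F\otimes M\otimes F^{-1}$ on $\Perf(\mathcal{A}^{op}\otimes\mathcal{A})$ applied to $\Id\to\Id[2]$, and that they agree up to the natural identifications $\eta_F$. A secondary concern is confirming that the Morita equivalence $\Ku(\X)\simeq\mathcal{A}$ intertwines the two Serre functors in a way compatible with $\rho$; this should follow from Theorem~\ref{moritainvariant}.
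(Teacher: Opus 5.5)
Your proposal is correct and follows the paper's proof. Perry's result says $\S_{\Ku(\X)}[-n-1]$ acts trivially on $\HH^{2}$, while $\S_{\mathcal{A}}[-n-1]\cong\{-\sum a_i\}$ acts on the $g$-summand by $g^{-\sum a_i}\neq 1$ for $g\neq 1$ (the gcd step the paper leaves implicit and you spell out), so every twisted sector vanishes and only $\Jac(f)_{d}$ remains.

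One bookkeeping correction: in the BFK formula $W_g$ is the part of $\mathbb{C}^{n+1}$ moved by $g$, so $W_1=0$ and $k_1=0$ (not rank $n+1$). That is why the $g=1$ summand at $(t,m)=(d,0)$ or $(0,2)$ is exactly $\Jac(f)_{d}$, and your parity worry does not arise.
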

 
 \begin{proof}
According to \cite[Proposition 1.2]{perry2021hochschild}, the functor $\S_{\mathcal{A}}[-n-1]$ acts trivially at $\HH^{2}(\Ku(\X))$. By Theorem~\ref{eigenspacedecomposition}, the corresponding decomposition of $\HH^{2}(\Ku(\X))\cong \Hom(\Delta,\Delta[2])\cong \Hom(\Delta,\Delta(d))$ has only one component $\Jac(f)_{d}$. Therefore, $\operatorname{dim} \HH^{2}(\Ku(\X))=\operatorname{dim}\Jac(f)_{d}$. 
 \end{proof}
 
 

\begin{thm}\label{reconstruction2}
Let $\X$ and $\Y$ be smooth degree $d$ hypersurface in $\mathbb{P}(a_{0},a_{1},\cdots,a_{n})$. Assume $\operatorname{gcd}(d,\sum^{n}_{i=0}a_{i})=1$. If there is a Fourier-Mukai equivalence $\Ku(\X)\cong \Ku(\Y)$, then $\X\cong \Y$.   
\end{thm}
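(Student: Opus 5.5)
The plan is to transport the invariant subalgebra through the equivalence and then apply the Mather--Yau reconstruction.

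\emph{Step 1: pass to matrix factorizations.} Write $f_{\X}$ and $f_{\Y}$ for the defining polynomials. Since $\Ku(\X)$ is nonempty only when $d\leq \sum_{i=0}^{n}a_{i}$, we may assume this inequality. Then \cite{BFK} gives Morita equivalences $\Ku(\X)\simeq \MF(\mathbb{C}^{n+1},\mathbb{C}^{\ast},f_{\X})$, and likewise for $\Y$. A Fourier--Mukai equivalence $\Ku(\X)\cong\Ku(\Y)$ comes from a kernel, so it is a Morita equivalence of the smooth proper dg enhancements. Composing, we obtain a Morita equivalence
$$F:\mathcal{A}_{\X}:=\MF(\mathbb{C}^{n+1},\mathbb{C}^{\ast},f_{\X})\simeq \mathcal{A}_{\Y}:=\MF(\mathbb{C}^{n+1},\mathbb{C}^{\ast},f_{\Y}).$$

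\emph{Step 2: transport the invariant subalgebra.} Apply Theorem~\ref{moritainvariant} with $i=1$ and $t=-n-1$. It gives an isomorphism of bi-graded algebras
$$F_{\ast}:\HS(\mathcal{A}_{\X})^{\rho(\S_{\mathcal{A}_{\X}}[-n-1])}\cong \HS(\mathcal{A}_{\Y})^{\rho(\S_{\mathcal{A}_{\Y}}[-n-1])}.$$
Because $F_{\ast}$ preserves the bi-grading $(m,n)$, it restricts, for each $t$, to isomorphisms between the pieces $\Hom(\Id,\S^{m_{0}t}[n_{0}t])^{\S[-n-1]}$.

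The integers $m_{0}$ and $n_{0}$ in Theorem~\ref{subringjacobian} depend only on $d$, the weights and $n$. They come from expressing $\{1\}$ in terms of $\S_{\mathcal{A}}$ and shifts, using $\S_{\mathcal{A}}\cong\{-\sum a_{i}\}[n+1]$, $\{d\}\cong[2]$ and $\gcd(d,\sum a_{i})=1$. Since the same integers apply to both $\X$ and $\Y$, Theorem~\ref{subringjacobian} gives graded algebra isomorphisms on both sides. Restricting $F_{\ast}$ therefore yields a graded ring isomorphism
$$\Jac(f_{\X})\cong\bigoplus_{t}\Hom(\Id,\S^{m_{0}t}[n_{0}t])^{\S[-n-1]}\cong\bigoplus_{t}\Hom(\Id,\S^{m_{0}t}[n_{0}t])^{\S[-n-1]}\cong \Jac(f_{\Y}).$$

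\emph{Step 3: Mather--Yau.} Both $f_{\X}$ and $f_{\Y}$ are quasi-homogeneous with isolated singularity at the origin, because $\X$ and $\Y$ are smooth. By \cite[Proposition 1.3]{Donagi1986}, the graded isomorphism $\Jac(f_{\X})\cong\Jac(f_{\Y})$ implies that $f_{\X}$ and $f_{\Y}$ differ by a graded (weight-preserving) change of coordinates. Such a change of coordinates is an automorphism of $\mathbb{P}(a_{0},\dots,a_{n})$, so $\X\cong\Y$.

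\emph{Main obstacle.} The delicate point is checking that the isomorphism of Theorem~\ref{subringjacobian} is genuinely multiplicative and that $F_{\ast}$ maps the chosen graded pieces onto each other. That is, the degree-$t$ Jacobian piece must sit in bidegree $(m_{0}t,n_{0}t)$ uniformly on both sides, which holds because $m_{0},n_{0}$ depend only on numerical data. One also needs that the Jacobian grading is recovered from the bi-grading, so that the isomorphism of Jacobian rings is graded rather than merely an algebra isomorphism. If Mather--Yau is only applied in its ungraded form, one would additionally note that, for quasi-homogeneous $f$, an isomorphism of Jacobian algebras already determines $f$ up to a coordinate change. The graded statement is needed to ensure that this coordinate change respects the weights.
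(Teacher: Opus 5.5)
Your argument is the paper's: you transport $\HS(-)^{\rho(\S[-n-1])}$ along the Morita equivalence of the BFK matrix factorization models, extract $\Jac(f)$ in bidegrees $(m_0t,n_0t)$ via Theorem~\ref{subringjacobian} with the same numerical $(m_0,n_0)$ on both sides, and conclude by Mather--Yau. The paper applies Theorem~\ref{moritainvariant} twice instead of composing the equivalences first, which makes no difference.

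One correction is needed. Your reason for assuming $d\le\sum_i a_i$ is false: for $d>\sum_i a_i$ the exceptional collection is empty, so $\Ku(\X)=\D^{\mathrm{b}}(\X)\neq 0$. The equivalence $\Ku(\X)\simeq\MF(\mathbb{C}^{n+1},\mathbb{C}^{\ast},f_{\X})$ genuinely requires $d\le\sum_i a_i$; for $d>\sum_i a_i$, Orlov's theorem only embeds $\D^{\mathrm{b}}(\X)$ into $\MF$. So $d\le\sum_i a_i$ is a tacit extra hypothesis, and the paper's proof uses it too. The range $d>\sum_i a_i$ would need a separate argument, for instance one of Bondal--Orlov type, since $\omega_{\X}$ is ample there.
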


\begin{proof}
  Assume $\X$ and $\Y$ are the degree $d$ hypersurfaces defined by polynomials $f_{\X}$ and $f_{\Y}$ respectively. Suppose there is a Fourier-Mukai equivalence $\Ku(\X)\cong \Ku(\Y)$, then according to Theorem~\ref{moritainvariant}, we have a natural isomorphism of bi-graded algebra,
  $$\HS(\Ku(\X))^{\S_{\Ku(\X)}[-n-1]}\cong \HS(\Ku(\Y))^{\S_{\Ku(\Y)}[-n-1]}.$$

  According to \cite[Theorem 6.13]{BFK}, we have a Morita equivalence,
   $$\Ku(\X)\cong \MF(\mathbb{C}^{n+1},\mathbb{C}^{\ast}, f_{X}).$$
Therefore, according to Theorem~\ref{moritainvariant} again, we have an isomorphism of bi-graded algebra,

$$\HS(\MF(\mathbb{C}^{n+1},\mathbb{C}^{\ast}, f_{\X}))^{\S_{f_{\X}}[-n-1]}\cong \HS(\MF(\mathbb{C}^{n+1},\mathbb{C}^{\ast}, f_{\Y}))^{\S_{f_{\Y}}[-n-1]},$$
where $\S_{f_{\X}}$ and $\S_{f_{\Y}}$ are the Serre functors of $\MF(\mathbb{C}^{n+1},\mathbb{C}^{\ast}, f_{\X})$ and $\MF(\mathbb{C}^{n+1},\mathbb{C}^{\ast}, f_{\Y})$ respectively. It induces an isomorphism of graded algebra via an embedding of grading $t\mapsto (m_{0}t,n_{0}t)$,
$$\bigoplus^{N}_{t=0}\Hom(\Id,\S_{f_{\X}}^{m_{0}t}[n_{0}t])^{\S_{f_{\X}}[-n-1]}\cong \bigoplus^{N}_{t=0}\Hom(\Id,\S_{f_{\Y}}^{m_{0}t}[n_{0}t])^{\S_{f_{\Y}}[-n-1]}$$
According to Theorem~\ref{subringjacobian}, we have an isomorphism of Jacobian algebra,
$$\Jac(f_{\X})\cong \Jac(f_{\Y}).$$
According to Mather--Yau reconstruction \cite[Proposition 1.3]{Donagi1986}, $\X\cong \Y$.
\end{proof}
\begin{thm}\label{matrixreconstruction2}
    Let $f_{1}$ and $f_{2}$ be two degree d homogeneous polynomial that defined smooth hypersurface in $\mathbb{P}(a_{0},a_{1},\cdots,a_{n})$. If there is a Morita equivalence of pairs $$(\MF(\mathbb{C}^{n+1},\mathbb{C}^{\ast},f_{1}),\{1\})\simeq (\MF(\mathbb{C}^{n+1},\mathbb{C}^{\ast},f_{2}),\{1\}),$$
   then $f_{1}$ and $f_{2}$ differ by a linear base change. 
\end{thm}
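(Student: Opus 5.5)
The plan is to mimic the proof of Theorem~\ref{reconstruction2}, replacing the Serre functor by the twist $\{1\}$. This removes the need for the numerical assumption $\gcd(d,\sum_i a_i)=1$.

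First I would note the following. A Morita equivalence of pairs $F$ is an equivalence with $F\circ\{1\}\circ F^{-1}\cong\{1\}$. The remark after Theorem~\ref{moritainvariant}, applied with $F_1=\{1\}$ on $\MF(\mathbb{C}^{n+1},\mathbb{C}^{\ast},f_1)$ and $F_2=\{1\}$ on $\MF(\mathbb{C}^{n+1},\mathbb{C}^{\ast},f_2)$, then gives an isomorphism of bi-graded algebras
$$\bigoplus_{t,m}\Hom(\Delta,\Delta(t)[m])^{\rho(\{1\})}\cong \bigoplus_{t,m}\Hom(\Delta',\Delta'(t)[m])^{\rho(\{1\})}.$$
Here $\Delta,\Delta'$ are the diagonal kernels for $f_1,f_2$. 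The isomorphism is the one induced by $F_\ast$, which respects composition and the grading by $t$, because $F$ intertwines $\{t\}$ with $\{t\}$.

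Second, I would identify the invariant part. By Theorem~\ref{eigenspacedecomposition}, in the decomposition of Proposition~\ref{extendedhochschild} the summand $\Jac(f_g)$ is the eigenspace of $\rho(\{1\})$ with eigenvalue $g$. So the $\rho(\{1\})$-invariants are exactly the summands with $g=1$. For $g=1$ we have $W_g=\mathbb{C}^{n+1}$, $f_g=f$ and $k_g=0$ (with the normalization of \cite{BFK}). Thus
$$\bigoplus_{t\ge 0}\Hom(\Delta,\Delta(t))^{\rho(\{1\})}\cong \bigoplus_{t}\Jac(f)_t=\Jac(f),$$
taking $m=0$, or $m\equiv n+1$ in the appropriate parity after a fixed shift, as in Theorem~\ref{subringjacobian}. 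I would then check that under this identification the composition product matches the multiplication in $\Jac(f)$. In the untwisted sector this is the classical statement that the $g=1$ part of $\HH^\ast$ of $\MF$ is the Jacobian ring, with $\{t\}$ corresponding to multiplication by degree-$t$ polynomials. This is the same input used in Theorem~\ref{subringjacobian}, and I would cite it through \cite{lin2024serre} and \cite{BFK}.

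Third, the invariant subalgebras restricted to the grading $\{(t,0)\}$, or the corresponding sub-grading, are isomorphic. Hence $\Jac(f_1)\cong\Jac(f_2)$ as graded algebras. Mather--Yau in the graded form \cite[Proposition 1.3]{Donagi1986} then says that $f_1$ and $f_2$ differ by a (weighted) linear base change.

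The main obstacle is the second step: verifying that the multiplicative structure on the $g=1$ eigenspace of $\bigoplus_t\Hom(\Delta,\Delta(t))$ is genuinely the Jacobian ring product, with no twisted-sector contributions or sign corrections. I would first try to reduce this to the known ring structure on $\HH^{\ast}$ of equivariant matrix factorizations in \cite{BFK}. I would use the fact that the projection onto the $g=1$ summand is a ring map once invariants are taken, since the product of two invariant elements lands in the invariant part by Corollary~\ref{invariant}.
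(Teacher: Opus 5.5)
Your proposal follows the paper's own proof: the $\rho(\{1\})$-invariant subalgebra is a Morita invariant of the pair (the remark after Theorem~\ref{moritainvariant}). On the slice $\bigoplus_t \Hom(\Id,\{1\}^t)$, Theorem~\ref{eigenspacedecomposition} identifies that invariant part with the $g=1$ summand $\Jac(f)$, and then Mather--Yau applies. Like the paper, you take the ring structure on the $g=1$ sector from \cite{BFK} and \cite{lin2024serre}, and you correctly note that no $\gcd$ assumption is needed. One small correction: as the paper uses Proposition~\ref{extendedhochschild} (see Theorems~\ref{subringjacobian} and \ref{dimequal}), $W_g$ is the moving part. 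So $\operatorname{rk}W_1=0$ and $k_1=0$, and the $g=1$ summand of $\Hom(\Delta,\Delta(t))$ is already $\Jac(f)_t$ at $m=0$, with no parity shift.
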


\begin{proof}
  We write $\{1\}_{f_{i}}$ as the twisted functor of $\MF(\mathbb{C}^{n+1},\mathbb{C}^{\ast},f_{i})$. According to Theorem~\ref{moritainvariant}, we have an isomorphism of bi-graded algebra,
  $$\xymatrix{\bigoplus^{N}_{i=0}\Hom(\Id,\{1\}^{i}_{f_{1}})^{\{1\}_{f_{1}}}\ar[r]^{\simeq}&\bigoplus^{N}_{i=0}\Hom(\Id,\{1\}^{i}_{f_{2}})^{\{1\}_{f_{2}}}}.$$
  $N$ is the maximal number with $\Jac(f_{1})\neq 0$. Then by Theorem~\ref{eigenspacedecomposition}, we have an isomorphism $\Jac(f_{1})\cong \Jac(f_{2})$. Thus, by \cite[Proposition 1.3]{Donagi1986}, $f_{1}$ and $f_{2}$ differ by a linear base change.
\end{proof}
Let $Y$ be a degree $4$ smooth hypersurface in $\mathbb{P}(1,1,1,1,2)$, and $\omega$ be the defined polynomial of $\Y$. Denote the residual category of $\Y$ as
$$\Ku(\Y)=\langle \mathcal{O}_{\        Y},\mathcal{O}_{\Y}(1)\rangle^{\perp}.$$
We have a natural map $\gamma_{\Y}$ defined by the action of Hochschild cohomology at Hochschild homology,
$$\xymatrix{\gamma_{\Y}:\HH^{2}(\Ku(\Y))\ar[r] & \Hom(\HH_{-1}(\Ku(\Y)),\Ku(\Y))\\}.$$
We give an another proof of the fact that the dimension of Kernel of $\gamma_{\Y}$ is one \cite[Theorem 3.1]{linssz2024kunetsovfanoconjecture}.
\begin{thm}\cite[Theorem 3.1]{linssz2024kunetsovfanoconjecture}\label{kernelgamma}
   The dimension of the kernel of $\gamma_{\Y}$ is one.
\end{thm}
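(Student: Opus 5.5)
Here $d=4$, the weights are $(1,1,1,1,2)$, $n=4$, and $\sum a_i=6$. So $\gcd(d,\sum a_i)=2$, and Theorem~\ref{subringjacobian} does not apply directly. The plan is to use the eigenspace decomposition of Theorem~\ref{eigenspacedecomposition} together with the trivial action of $\S_{\Ku(\Y)}[-n-1]$ on Hochschild cohomology, exactly as in the proof of Theorem~\ref{dimequal}, and then to compute $\gamma_{\Y}$ componentwise.

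\textbf{Step 1: identify the source and target.} By \cite[Theorem 6.13]{BFK}, $\Ku(\Y)\simeq \MF(\mathbb{C}^{5},\mathbb{C}^{\ast},\omega)$. By \cite[Proposition 2.2]{lin2024ivhskuznetsovcomponentscategorical}, $\S[-5]\cong\{-6\}$. Proposition~\ref{extendedhochschild} then gives
\[\HH^{2}\cong\Hom(\Delta,\Delta[2])\cong\Hom(\Delta,\Delta(4)),\]
which splits into summands $\Jac(\omega_g)$ for $g\in\mu_4$. Since $g^{-6}=g^{2}$, the operator $\rho(\S[-5])$ acts on the $g$-summand by $g^{2}$. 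By \cite[Proposition 1.2]{perry2021hochschild} this action is trivial on $\HH^{2}$, so only $g=\pm1$ can contribute. The $g=1$ summand is $\Jac(\omega)_4$. For $g=-1$, the fixed locus is the line of the weight-$2$ variable $x_4$, so $\operatorname{rk}W_{-1}=1$. This is odd, while $m=2$ is even, so the parity condition kills this summand. Hence $\HH^{2}\cong\Jac(\omega)_4$.

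The same decomposition applied to $\HH_{-1}\cong\Tr(\S^{-1}\circ\Delta[-1])$ gives $\Jac(\omega)_{k}$ for the appropriate degree $k$. From the untwisted sector, $k_1=\sum a_i=6$ and the degree is shifted by $d\cdot(-1-5)/2=-12$, to be recomputed with the paper's conventions. Possibly a $g=-1$ piece also appears here, coming from the curve $\{x_4^2=\dots\}$ contribution.

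\textbf{Step 2: identify $\gamma_{\Y}$ with multiplication.} The map $\gamma_{\Y}$ is the action of $\HH^2$ on $\HH_\ast$. Via Lemmas~\ref{lem5}--\ref{lem7} it is compatible with the module structure of the trace over $\Hom(\Delta,\Delta(\ast))$. Since $\rho(\{1\})$ commutes with this action, the action respects the eigen-decomposition. On the $g=1$ sectors it becomes multiplication in the Jacobian ring,
\[\Jac(\omega)_4\to \Hom\bigl(\Jac(\omega)_{a},\Jac(\omega)_{a+4}\bigr),\]
where $\HH_{-1}=\Jac(\omega)_a$ and $\HH_{1}=\Jac(\omega)_{a+4}$. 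This matches the Griffiths residue description $H^{2,1}\to H^{1,2}$ for the primitive cohomology of the Fano threefold.

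\textbf{Step 3: compute the kernel.} The generic element of $\Jac(\omega)_4$ is a quartic in $x_0,\dots,x_3,x_4$ with $\deg x_4=2$. Its kernel under multiplication should be spanned by $x_4^2$, or more precisely by the class of the element acting by zero: $x_4$ has degree $2$ and the relevant graded pieces are $\Jac_a=\langle x_0,\dots,x_3\rangle$-type. One checks that $\partial\omega/\partial x_4$ can be used to eliminate $x_4$-terms. Concretely, writing $\omega=x_4^2+q(x_0,\dots,x_3)$ after a change of variables, the Jacobian ring is $\Jac(q)\otimes k[x_4]/(x_4)$. The sectors carrying $\HH_{\pm1}$ are degree-$1$ and degree-$5$ pieces of $\Jac(q)$, and one shows that multiplication $\Jac(q)_4\to\Hom(\Jac(q)_1,\Jac(q)_5)$ has a kernel of dimension exactly one, by Macaulay duality, since the socle degree is $8$.

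The hard part is in Steps 2 and 3: checking that the twisted sector does not contribute to $\HH_{\pm1}$ or to the map, and then verifying the one-dimensional kernel by a Gorenstein duality argument. That argument pairs $\Jac_4\times\Jac_1\times\Jac_3\to\Jac_8$, and I have not yet matched it correctly to the generic-$q$ computation.
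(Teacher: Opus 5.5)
Your Step 1 misreads the parity condition in Proposition~\ref{extendedhochschild}. There, $\operatorname{rk}W_g$ has to be the codimension of the fixed locus (the rank of its normal part), not its dimension. The case $g=1$ shows this: on your reading $\operatorname{rk}W_1=5$ is odd, so the untwisted sector would also drop out of $\HH^2=\Hom(\Delta,\Delta[2])$, contradicting the $\Jac(\omega)_4$ you keep.

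For $g=-1$ the fixed locus is the $x_4$-line, of codimension $4$. So the parity condition holds, and this sector contributes $\Jac(\omega|_{W_{-1}})_0\cong\mathbb{C}$, since $\omega|_{W_{-1}}=cx_4^2$ with $c\neq 0$ by smoothness of $\Y$. Hence $\HH^2(\Ku(\Y))\cong\Jac(\omega)_4\oplus\mathbb{C}$. Your own computation that $\rho(\S[-5])$ acts by $g^2$ already shows this sector is compatible with the triviality on $\HH^2$; it is exactly what $\gcd(4,6)=2$ lets through.

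As a result, Step 3 looks for the kernel in the wrong place. The correct targets are $\HH_{-1}\cong\Jac(\omega)_2$ and $\HH_1\cong\Jac(\omega)_6$, both $10$-dimensional. Your degrees $1$ and $5$ give dimensions $4$ and $16$, which is impossible since $\dim\HH_{-1}=\dim\HH_1$. The multiplication map $\Jac(\omega)_4\to\Hom(\Jac(\omega)_2,\Jac(\omega)_6)$ is injective by Macaulay duality: if $a\cdot\Jac(\omega)_2=0$, then $a\cdot\Jac(\omega)_4=a\cdot\Jac(\omega)_2\cdot\Jac(\omega)_2=0$, so $a=0$ by the perfect pairing into the socle $\Jac(\omega)_8$. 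The same argument makes your map $\Jac(q)_4\to\Hom(\Jac(q)_1,\Jac(q)_5)$ injective. So no Gorenstein argument can produce a one-dimensional kernel there, and your route would give $\ker\gamma_{\Y}=0$, contradicting the statement.

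The missing idea is that the kernel is exactly the twisted line $\mathbb{C}\alpha=\Jac(\omega|_{W_{-1}})_0$. It is killed by an eigenvalue argument, not by Jacobian-ring algebra. You already note in Step 2 that $\gamma_{\Y}$ commutes with $\{1\}_{\ast}$; apply this to $\alpha$. By Theorem~\ref{eigenspacedecomposition}, $\{1\}_{\ast}\alpha=-\alpha$, while $\{1\}_{\ast}$ acts by $1$ on $\HH_{\pm1}$, which lie in the untwisted sector. Hence for every $\beta\in\HH_{-1}$,
\[
\gamma_{\Y}(\alpha)(\beta)=\{1\}_{\ast}\bigl(\gamma_{\Y}(\alpha)(\beta)\bigr)=\gamma_{\Y}(\{1\}_{\ast}\alpha)(\{1\}_{\ast}\beta)=-\gamma_{\Y}(\alpha)(\beta),
\]
so $\gamma_{\Y}(\alpha)=0$. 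On the $\Jac(\omega)_4$-summand, $\gamma_{\Y}$ is the injective multiplication map, as in your Step 2. Together these show the kernel of $\gamma_{\Y}$ is precisely $\mathbb{C}\alpha$, which is one-dimensional.
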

\begin{proof}
 Similarly with \cite{linssz2024kunetsovfanoconjecture}, we identify $\Ku(\Y)$ with the category of matrix factorization $\MF(\mathbb{C}^{5},\mathbb{C}^{\ast},f)$. According to the computations in the proof of \cite[Theorem 3.1]{linssz2024kunetsovfanoconjecture}, we have 
  \begin{align*}
      \HH^{2}(\Ku(\Y))\cong& \Jac(\omega)_{4}\oplus \Jac(\omega\vert_{-1})_{0}\cong \Jac(\omega)_{4}\oplus \mathbb{C}.\\
      \HH_{-1}(\Ku(\Y))\cong & \Jac(\omega)_{2}.\\
      \HH_{1}(\Ku(\Y))\cong & \Jac(\omega)_{6}.
  \end{align*}
Consider any $\alpha\in\HH^{2}(\Ku(\Y))$ that project to $0\in \Jac(\omega)_{4}$. Let $\mathrm{m}$ be the map defined by multiplication of Jacobian ring,
$$\mathrm{m}: \Jac(\omega)_{4}\rightarrow \Hom(\Jac(\omega)_{2},\Jac(\omega)_{10}).$$
According to \cite[Theorem 1.6]{linssz2024kunetsovfanoconjecture}, we have a commutative diagram,
$$\xymatrix@C=3cm{\HH^{2}(\Ku(\Y))\ar[r]^{\gamma}&\Hom(\HH_{-1}(\Ku(\Y)),\HH_{1}(\Ku(\Y)))\ar[dd]^{\simeq}\\
\bigcup&\\
\Jac(\omega)_{4}\ar[r]^{m}& \Hom(\Jac(\omega)_{2},\Jac(\omega)_{6})}$$
Since $\mathrm{m}$ is injective, to show that the dimension of kernel of $\gamma_{\Y}$ is one is equivalent to show $\gamma(\alpha)=0$. Let $\beta\in \HH_{-1}(\Ku(\Y))$. Consider the twisted functor $\{1\}$, we have a commutative diagram,
  $$\xymatrix@C=3cm{\HH^{2}(\Ku(\Y))\ar[r]^{\gamma_{\Y}}\ar[d]^{\{1\}_{\ast}}&\Hom(\HH_{-1}(\Ku(\Y)),\HH_{1}(\Ku(\Y)))\ar[d]^{\{1\}_{\ast}}\\
  \HH^{2}(\Ku(\Y))\ar[r]^{\gamma_{\Y}}&\Hom(\HH_{-1}(\Ku(\Y)),\HH_{1}(\Ku(\Y)))\\}$$
Therefore, we have,
$$\{1\}_{\ast}(\gamma(\alpha)(\beta))=\gamma(\{1\}_{\ast}(\alpha))(\{1\}_{\ast}\beta).$$
According to Theorem~\ref{eigenspacedecomposition}, $\{1\}_{\ast}(\alpha)=-\alpha$, $\{1\}_{\ast}\beta=\beta$, and $\{1\}_{\ast}(\gamma(\alpha)(\beta))=\gamma(\alpha)(\beta)$.
We have,
$$\gamma(\alpha)(\beta)=-\gamma(\alpha)(\beta).$$
Therefore, $\gamma(\alpha)(\beta)=0$ for any $\beta\in \HH_{-1}(\Ku(\Z))$, and hence $\gamma(\alpha)=0$.
\end{proof}

\bibliographystyle{alpha}
{\small{\bibliography{reference}}}

\end{document}